\documentclass[11pt]{article}
\usepackage[numbers,sort&compress]{natbib}
\usepackage{enumerate}
\usepackage{amscd}
\usepackage{amsmath}
\usepackage{latexsym}
\usepackage{amsfonts}
\usepackage{amssymb}
\usepackage{amsthm}
\usepackage{verbatim}
\usepackage{mathrsfs}
\usepackage{enumerate}
\usepackage{hyperref}

 \oddsidemargin .5cm \evensidemargin .5cm \marginparwidth 40pt
 \marginparsep 10pt \topmargin 0.30cm
 \headsep1pt
 \headheight 0pt
 \textheight 9.1in
 \textwidth 6in
 \sloppy

 \setlength{\parskip}{8pt}

\theoremstyle{plain}\newtheorem{definition}{Definition}[section]
\theoremstyle{definition}\newtheorem{theorem}{Theorem}[section]
\theoremstyle{plain}\newtheorem{lemma}[theorem]{Lemma}
\theoremstyle{plain}\newtheorem{coro}[theorem]{Corollary}
\theoremstyle{plain}
\theoremstyle{remark}\newtheorem{remark}{Remark}[section]
\usepackage{xcolor}

\newcommand{\norm}[1]{\left\|#1\right\|}
\newcommand{\Div}{\mathrm{div}\,}
\newcommand{\B}{\Big}

\newcommand{\be}{\begin{equation}}
\newcommand{\ee}{\end{equation}}
 \newcommand{\ba}{\begin{aligned}}
 \newcommand{\ea}{\end{aligned}}

\newcommand{\fbxo}{\int_{B_{\varepsilon}(x)}\!\!\!\!\!\!\!\!\!\!\!\!\!\!\!\! -~\,\,~\,~\,}

\newcommand{\fbxoo}{\int_{B_{\varepsilon}(0)}\!\!\!\!\!\!\!\!\!\!\!\!\!\!\!\! -~\,~\,~\,~\,}

  \newcommand{\f}{\frac}
    
  \newcommand{\ben}{\begin{enumerate}}
   \newcommand{\een}{\end{enumerate}}

\newcommand{\Rmnum}[1]{\expandafter\@slowromancap\romannumeral #1@}

\allowdisplaybreaks

\numberwithin{equation}{section}
%%%%%%%%%%%%%%%%%%%%%%%%%%%%%%%%%%%%%%%%%%%%%%%%%%%%%%%%%%%%%%%%%%%%%%%%%%%%%%%%%%%%%%%%%%%%%%%%%%%%
\begin{document}
%%%%%%%%%%%%%%%%%%%%%%%%%%%%%%%%%%%%%%%%%%%%%%%%%%%%%%%%%%%%%%%%%%%%%%%%%%%%%%%%%%%%%%%%%%%%%%%%%%%%
\title{On the energy and helicity    conservation of the incompressible Euler equations   }
\author{Yanqing Wang\footnote{College of Mathematics and   Information Science, Zhengzhou University of Light Industry, Zhengzhou, Henan  450002,  P. R. China Email: wangyanqing20056@gmail.com}, ~\, Wei Wei\footnote{School of Mathematics and Center for Nonlinear Studies, Northwest University, Xi'an, Shaanxi 710127,  P. R. China  Email: ww5998198@126.com}, ~\, Gang Wu\footnote{School of Mathematical Sciences,  University of Chinese Academy of Sciences, Beijing 100049, P. R. China Email: wugang2011@ucas.ac.cn}~~~~and ~\,Yulin Ye\footnote{School of Mathematics and Statistics,
Henan University,
Kaifeng, 475004,
P. R. China. Email: ylye@vip.henu.edu.cn}
 }
\date{}
\maketitle
\begin{abstract}
In this paper, we are concerned with the  minimal regularity of weak solutions implying the law of balance for both energy and helicity in the incompressible Euler equations. In the spirit of recent works due to
Berselli \cite{[B]} and
Berselli-Georgiadis \cite{[BG]}, it is shown that the energy of weak solutions is  invariant if  $v\in L^{p}(0,T;B^{\f1p}_{\f{2p}{p-1},c(\mathbb{N})} )$ with $1<p\leq3$
 and the helicity is conserved if  $v\in L^{p}(0,T;B^{\f2p}_{\f{2p}{p-1},c(\mathbb{N})} )$ with $2<p\leq3 $
for both the periodic domain   and the
whole space, which generalizes the classical work of Cheskidov-Constantin-Friedlander-Shvydkoy in \cite{[CCFS]}.
This  indicates the role of the time integrability, spatial integrability and differential regularity of the velocity  in the   conserved quantities  of weak solutions of the ideal fluid.

  \end{abstract}
\noindent {\bf MSC(2020):}\quad   35Q35,  35Q86, 76D03\\\noindent
{\bf Keywords:}  Euler equations; Onsager conjecture;    energy conservation; helicity conservation %%%%%%%%%%
\section{Introduction}
\label{intro}
\setcounter{section}{1}\setcounter{equation}{0}

The evolution of homogeneous inviscid incompressible flows on $\Omega$ is described by the following Euler equations
\begin{equation}\left\{\begin{aligned}\label{Euler}
&v_{t}+v\cdot \nabla v+\nabla \Pi=0,\\ &\text{div}\, v=0,\\
&v|_{t=0}=v_{0}(x),
\end{aligned}\right.\end{equation}
where $\Omega$ is either the whole space $\mathbb{R}^{d}$ or the periodic domain $\mathbb{T}^{d}$ with $d=2,3$.
The unknown vector $v=(v_{1},\cdots,v_{d})$ is velocity field and $\Pi$ represents the pressure.
 The
initial  velocity $v_0$ satisfies   $\text{div}\,v_0=0$.  The  vorticity $\omega=\text{curl\,}v$ of the velocity field is determined by \be
\omega_{t}+v\cdot\nabla\omega-\omega\cdot\nabla v=0,~~\text{div\,}\omega=0.\label{vorticityeq}
\ee
As we know, there exist only two conserved  quantities  of second order in the Euler equations: kinetic energy  conservation
\begin{equation}\label{eng}
\f12\int_{\Omega}|v(x,t)|^{2}dx=\f12\int_{\Omega}|v(x,0)|^{2}dx
\end{equation} and helicity preservation
\begin{equation}\label{he}
\int_{\Omega} \omega(x,t)\cdot v(x,t)  dx=\int_{\Omega} \omega(x,0)\cdot v(x,0)  dx.
\end{equation} These two conserved  quantities  attract a lot of  attentions and research of physicists and mathematicians(see e.g. \cite{[Frisch],[Eyink1],[DR],[Chkhetiani],[De Rosa],
[Moffatt],[MT],[Eyink],[CCFS],[CET],[WWY231],[Onsager]}). The celebrated Kolmogorov 4/5 law and Yaglom 4/3 law are in terms of energy of the Euler equations. The study of energy and helicity plays an important role in the theory of turbulence (see e.g. \cite{[Chkhetiani],[Frisch],[Moffatt],[MT]} and  references therein). The famous Onsager conjecture  raised in \cite{[Onsager]} is stated that the critical H\"older regularity of weak solutions keeping the energy is 1/3. A number of works have devoted to the
minimal regularity of weak solutions of the Euler system to preserve the energy(see e.g. \cite{[Eyink],[CET],[CCFS],[B],[BG],[FW2018],[BGSTW]} and  references therein).  To be specific, we   give a brief survey here.
The energy of weak solutions $v$ of the Euler equations  \eqref{Euler} is  invariant if
one of the following conditions is satisfied
 \begin{itemize}
\item Eyink \cite{[Eyink]}: $v\in C_{\ast}^{\alpha}, \alpha>1/3$;
\item Constantin-E-Titi \cite{[CET]} $v\in L^{3}(0,T; B^{\alpha}_{3,\infty}(\mathbb{T}^{3}))$ with $\alpha>1/3$.
\item Duchon and Robert:    \cite{[DR]}:
 $\int_{\mathbb{T}^{d}}|v(t,x+\xi)-v(t,x)|^3dx \leq C(t)|\xi|\sigma(|\xi|)$, where $C(t)\in L^1(0,T)$ and $\sigma(a)\rightarrow 0$ as $a\rightarrow 0$;
\item Cheskidov-Constantin-Friedlander-Shvydkoy   \cite{[CCFS]}:  $v\in L^{3} (0,T;\in B^{1/3}_{3,c(\mathbb{N})});$
       \item  Fjordholm-Wiedemann \cite{[FW2018]}: $v\in L^{3} (0,T;\underline{B}^{1/3 }_{3,VMO}
  );$
\item   Berselli   \cite{[B]}:   $v\in L^{\f1\alpha+\delta}(0,T;C^{\alpha}(\mathbb{T}^{3})$ or  $v\in L^{\f1\alpha}(0,T;C_{w}^{\alpha}(\mathbb{T}^{3}))$,  $\alpha\in(1/3,1), \delta>0$;

\item  Berselli-Georgiadis \cite{[BG]}:
 $v\in L^{\f1\alpha}(0,T; B^{\beta}_{\f{2}{1-\alpha},\infty}(\mathbb{T}^{3})),
 \alpha\in(1/3,1),1/3<\alpha<\beta<1;$
 $v\in L^{r}(0,T; W^{1,q}(\mathbb{T}^{3})),
 r>\f{5q}{5q-6},q>2.$
 \end{itemize}
The inclusion relations of the aforementioned   spaces are that,
for $\alpha >1/3$,
\be\label{includ}
C_{\ast}^{\alpha}(C_{w}^{\alpha}) \subseteq C^\alpha \subseteq B^\alpha_{3,\infty}\subseteq B^{\frac{1}{3}}_{3,c(\mathbb{N})}\subseteq \underline{B}^{\frac{1}{3}}_{3,VMO}\subseteq B^{\frac{1}{3}}_{3,\infty}.
\ee
Inspired by the recent results  \cite{[B],[BG]} and classical critical Onsager sufficient condition via $B^{1/3}_{3,c(\mathbb{N})}$ in \cite{[CCFS]}, two natural questions are weather  $v\in L^{\f1\alpha}(0,T; B^{\beta}_{\f{2}{1-\alpha},\infty})$ with $1/3<\alpha<\beta$   can be replaced by
$v\in L^{\f1\alpha}(0,T;B^{ \alpha}_{\f{2}{1-\alpha},c(\mathbb{N})}(\mathbb{T}^{3}))$ and $v\in L^{\f{5q}{5q-6}}(0,T; W^{1,q}(\mathbb{T}^{3}))$
guarantees the energy conservation of weak solutions in the Euler equations \eqref{Euler}. The first result of this paper is formulated as follows.
\begin{theorem}\label{the1.1} Let $v$ be a weak solution of the incompressible   Euler equations \eqref{Euler} on the whole spaces $\mathbb{R}^{d}$ for $d=2,3$.
Then for any $0\leq t\leq T$, the energy equality \eqref{eng} of weak solutions   is preserved provided one of the following condition is satisfied
 \begin{enumerate}[(1)]
 \item $v\in L^{p}(0,T;B^{\f1p}_{\f{2p}{p-1},c(\mathbb{N})}(\mathbb{R}^{d})),1< p\leq3;$
 \item $\nabla v\in L^{p}(0,T; L^{\f{2dp}{(d+2)(p-1)}}(\mathbb{R}^{d})),1< p\leq3.$
 \end{enumerate}
\end{theorem}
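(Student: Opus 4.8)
The plan is to use the standard Constantin--E--Titi mollification scheme adapted to the mixed time--space regularity, following the approach of Cheskidov--Constantin--Friedlander--Shvydkoy \cite{[CCFS]} and Berselli--Georgiadis \cite{[BG]}. Let $v^{\varepsilon}=v\ast\varphi_{\varepsilon}$ be the spatial mollification of $v$. Testing the mollified Euler equations against $v^{\varepsilon}$ and integrating over $\Omega\times[0,t]$, one arrives at the by-now classical identity in which the only obstruction to energy balance is the flux term $\iint_{0}^{t}\!\!\int_{\Omega}\nabla v^{\varepsilon}:(v\otimes v)^{\varepsilon}-(v^{\varepsilon}\otimes v^{\varepsilon})\,dx\,ds$, which after the Constantin--E--Titi commutator manipulation is controlled pointwise in time by
\[
\Bigl|\int_{\Omega}\mathrm{tr}\bigl(\nabla v^{\varepsilon}\,r_{\varepsilon}(v,v)\bigr)\,dx\Bigr|\lesssim \frac{1}{\varepsilon}\int_{\Omega}\!\!\int_{|y|\le\varepsilon}|\delta_{y}v(x)|^{3}\,\frac{dy}{\varepsilon^{d}}\,dx,
\]
where $\delta_{y}v(x)=v(x+y)-v(x)$. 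So the real work is to show this quantity, integrated in time against the available $L^{p}_{t}$ norm, vanishes as $\varepsilon\to0$.

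The key step for part (1) is a Besov interpolation/Hölder-in-time argument. The exponent $\tfrac{2p}{p-1}$ and the smoothness $\tfrac1p$ are chosen precisely so that the spatial factor $\bigl(\int_{|y|\le\varepsilon}\|\delta_{y}v\|_{L^{2p/(p-1)}}^{3}\,\tfrac{dy}{\varepsilon^{d}}\bigr)^{1/3}$ behaves, after Hölder in $x$ with the conjugate exponents balancing three copies of $L^{2p/(p-1)}$ against a remaining $L^{?}$ slot, like $\varepsilon^{1/p}$ times a quantity measured by the $B^{1/p}_{2p/(p-1),\infty}$ seminorm — and then Hölder in time with exponents $p$ and $p/(p-3)$ (legitimate since $p\le 3$, with the endpoint $p=3$ recovering exactly the CCFS scaling) closes the estimate. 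The role of the subscript $c(\mathbb{N})$ is exactly as in \cite{[CCFS]}: it upgrades the bound $O(1)$ coming from $B^{1/p}_{2p/(p-1),\infty}$ to $o(1)$ as $\varepsilon\to0$, because membership in $B^{s}_{q,c(\mathbb{N})}$ forces the dyadic Littlewood--Paley pieces beyond a large frequency to carry negligible mass, so the Bernstein-type estimate $\|\delta_y v\|\lesssim |y|^{1/p}\sum_j \min(1,|y|2^j)^{?}2^{sj}\|\Delta_j v\|$ splits into a low-frequency part that is $o(1)$ by absolute continuity and a high-frequency part that is $o(1)$ by the $c(\mathbb{N})$ decay. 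One then passes to the limit $\varepsilon\to 0$ and concludes \eqref{eng}; the fact that $v\in L^{p}_t L^{2p/(p-1)}_x\hookrightarrow L^3_{t,x}$ for $1<p\le 3$ (again via Hölder in time, using $2p/(p-1)\ge 2$) ensures $v\otimes v\in L^{3/2}$ so all integrals in the mollified identity converge and $t\mapsto \tfrac12\|v(t)\|_2^2$ is continuous, making the equality hold for every $t$.

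For part (2), the plan is to reduce it to part (1) by a Sobolev embedding on the spatial variable at fixed time: $W^{1,q}(\mathbb{R}^d)\hookrightarrow B^{s}_{r,\infty}(\mathbb{R}^d)$ whenever $1-\tfrac dq = s-\tfrac dr$ with $s<1$, and in particular one checks that the choice $q=\tfrac{2dp}{(d+2)(p-1)}$ gives $1-\tfrac dq = \tfrac1p - \tfrac{d(p-1)}{2p}$, i.e. the pair $(s,r)=(\tfrac1p,\tfrac{2p}{p-1})$ satisfies the embedding condition; hence $\nabla v\in L^p_tL^q_x$ implies $v\in L^p_t B^{1/p}_{2p/(p-1),\infty,x}$ locally, and one recovers the $c(\mathbb{N})$ (indeed the stronger absolute continuity of the frequency tail) from the genuine one-derivative gain, so part (1)'s conclusion applies verbatim. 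Alternatively, and perhaps more cleanly, one can run the commutator estimate directly with $\|\delta_y v\|_{L^{3}}\le \|\delta_y v\|_{L^{q}}^{\theta}\|\delta_y v\|_{L^{\infty}\text{-type}}^{1-\theta}$ replaced by the one-sided bound $|\delta_y v(x)|\le \int_0^1|\nabla v(x+\tau y)||y|\,d\tau$, producing a factor $\varepsilon$ (one full derivative) against which the smaller integrability exponent $q<2p/(p-1)$ is exactly affordable, the gain $\varepsilon^{1-1/p}$ over part (1) being what compensates the loss $L^{2p/(p-1)}\to L^q$; the flux then tends to $0$ without even needing $c(\mathbb{N})$.

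The main obstacle I anticipate is bookkeeping the three-way Hölder split in $x$ so that the spatial norm really collapses onto a \emph{single} Besov seminorm of $v$ (rather than a product of a Besov norm and an extra $L^{r'}$ norm that one cannot absorb), together with verifying that the time exponent $p/(p-3)$ — or its correct conjugate-pair partner — stays in range throughout $1<p\le3$ and degenerates gracefully at the endpoint $p=3$. A secondary but real technical point, needed to make the pointwise-in-time estimates meaningful and to justify Fubini, is establishing enough a priori regularity of the weak solution (e.g. $v\in C_w(0,T;L^2)$ after a possible redefinition on a null set, and $\partial_t v^{\varepsilon}\in L^{3/2}_{loc}$) so that the integrated mollified equation is valid for all $t\in[0,T]$ and not merely almost every $t$; on the whole space one also has to handle the usual mild decay-at-infinity issues, which are routine given $v\in L^2$.
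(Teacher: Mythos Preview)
Your overall strategy is workable, but the paper takes a markedly shorter path for part~(1), and your exponent bookkeeping contains a real error.

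For part~(1), rather than rerunning the full Constantin--E--Titi commutator argument at the new exponents, the paper simply \emph{reduces to the classical CCFS criterion} $v\in L^{3}(0,T;B^{1/3}_{3,c(\mathbb{N})})$. Using the a~priori energy bound $v\in L^{\infty}_{t}L^{2}_{x}$, one interpolates at each dyadic frequency:
\[
2^{j/3}\|\Delta_{j}v\|_{L^{3}}\le \|\Delta_{j}v\|_{L^{2}}^{1-p/3}\bigl(2^{j/p}\|\Delta_{j}v\|_{L^{2p/(p-1)}}\bigr)^{p/3},
\]
which gives $L^{p}_{t}B^{1/p}_{2p/(p-1),c(\mathbb{N})}\cap L^{\infty}_{t}L^{2}\hookrightarrow L^{3}_{t}B^{1/3}_{3,c(\mathbb{N})}$, and one finishes by citing \cite{[CCFS]}. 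This sidesteps entirely the three-way H\"older split you flag as the main obstacle. Your direct mollification route \emph{can} be made to work --- via the same spatial interpolation $\|\delta_{y}v\|_{L^{3}}^{3}\le\|\delta_{y}v\|_{L^{2}}^{3-p}\|\delta_{y}v\|_{L^{2p/(p-1)}}^{p}$ applied pointwise in time, bounding the first factor by $\|v\|_{L^{\infty}_{t}L^{2}}^{3-p}$ --- but as written your time-H\"older pair ``$p$ and $p/(p-3)$'' is nonsensical for $1<p\le3$ (the second exponent is nonpositive). No conjugate-pair H\"older in time is actually needed: after the spatial interpolation the remaining time integral is just $\int_{0}^{T}\|\delta_{y}v\|_{L^{2p/(p-1)}}^{p}\,dt=o(\varepsilon)$ by the $c(\mathbb{N})$ hypothesis, and dividing by $\varepsilon$ gives $o(1)$.

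For part~(2), your first approach --- reduce to part~(1) via the embedding $W^{1,\frac{2dp}{(d+2)(p-1)}}\hookrightarrow B^{1/p}_{2p/(p-1),\infty}$, noting that a full derivative forces the frequency tail to vanish --- is correct; the paper explicitly records this embedding in \eqref{sembed}. The paper's own proof, however, takes a third route: it uses the Littlewood--Paley low-pass projector $S_{N}$ instead of a mollifier, rewrites the flux as $\int S_{N}\bigl[v_{j}(\mathrm{Id}-S_{N}^{2})v_{i}\bigr]\,S_{N}\partial_{j}v_{i}\,dx$, and shows this vanishes as $N\to\infty$ by H\"older plus Gagliardo--Nirenberg, using the $\nabla v$ hypothesis directly together with $(\mathrm{Id}-S_{N}^{2})v\to0$ in $L^{\frac{2p}{p-1}}_{t}L^{\frac{4dp}{dp+d-2p+2}}_{x}$. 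Both arguments close; the paper's avoids the embedding step but requires the $S_N$ machinery, while yours is conceptually cleaner once part~(1) is in hand.
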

\begin{remark}The first part of this theorem is a generalization of  classical energy conservation criterion $v\in L^{3} (0,T; B^{1/3}_{3,c(\mathbb{N})}) $ showed by  Cheskidov-Constantin-Friedlander-Shvydkoy for the Euler equations in \cite{[CCFS]}.
\end{remark}
\begin{remark}
Actually, by means of interpolation method, the range of $p$ in the first part of this theorem as well as Theorem \ref{the1.3} is $[1,3]$. Indeed, the interpolation  allows us to avoid the finite difference  quotients of higher order in the definition of   Besov spaces.
\end{remark}

The key point in \cite{[B],[BG]} is to make full use of the kinetic energy in the   Constantin-E-Titi type   estimates concerning  mollifier kernel on the periodic domain $\mathbb{T}^{d}$.
In the spirit of this and the study of energy equality for the Navier-Stokes equations  in \cite{[CL]},
the starting point of this theorem is the following interpolation inequality
$$2^{\f13 j}\|\Delta_{j}v\|_{L^{3}(\mathbb{R}^{d})}\leq \|\Delta_{j}v\|_{L^{2}(\mathbb{R}^{d})}^{1-\f{p}{3}}
\B[2^{j\f1p}\|\Delta_{j}v\|_{L^{\f{2p}{p-1}}(\mathbb{R}^{d})}\B]^{\f{p}{3}},$$
which helps us to reduce the first part of this theorem to the classical Onsager energy conservation sufficient condition  $v\in B^{1/3}_{3,c(\mathbb{N})} $
due to
Cheskidov-Constantin-Friedlander-Shvydkoy  in \cite{[CCFS]}.
In addition, it is worth pointing out that there exists  an alternative  proof to this part via Littlewood-Paley theory as \cite{[CCFS]}.
 Roughly speaking, it suffices to replace $\omega$ by $v$ and set  $k=p,\ell=q$ in \eqref{4.11}-\eqref{4.13}, hence we omit the details here.
To the knowledge of the authors, the statement similar to Theorem  \ref{the1.1}  does not exist in known literature.  Enlightening by the recent sufficient conditions for the energy equality of the Navier-Stokes equations in \cite{[WMH],[WY]},
 we will show the energy conservation  criteria based on the  gradient  of the veolcity for the weak solutions of the Euler equations.

For the torus case, we have the following result
\begin{theorem}\label{the1.2} Let $v$ be a weak solution to  the incompressible   Euler equations \eqref{Euler} on $\mathbb{T}^{d}$ with $d=2,3$. Then for any $0\leq t\leq T$, the energy equality \eqref{eng} of weak solutions   is preserved provided that  one of the following conditions is satisfied
 \begin{enumerate}[(1)]
  \item  $v\in L^{p}(0,T;B^{\f1p}_{\f{2p}{p-1},c(\mathbb{N})}(\mathbb{T}^{d})),1<p\leq3;$
 \item  $v\in L^{p}(0,T;\underline{B}^{\f1p}_{\f{2p}{p-1},VMO}(\mathbb{T}^{d})),1<p\leq3;$
\item $\nabla v\in L^{p}(0,T; L^{\f{2dp}{(d+2)(p-1)}}(\mathbb{T}^{d})),1<p\leq3.$
 \end{enumerate}
\end{theorem}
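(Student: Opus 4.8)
The plan is to obtain each of the three criteria from a known critical sufficient condition by the same Littlewood--Paley interpolation that underlies Theorem~\ref{the1.1}, now carried out on $\mathbb{T}^{d}$, the common input being that a weak solution of \eqref{Euler} satisfies $v\in L^{\infty}(0,T;L^{2}(\mathbb{T}^{d}))$.

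For Part (1), set $q=\tfrac{2p}{p-1}$. Since $\tfrac13=\tfrac12(1-\tfrac p3)+\tfrac1q\cdot\tfrac p3$ and $2^{j/3}=(2^{j/p})^{p/3}$, the log-convexity of the $L^{s}$-norms gives, for a.e.\ $t$ and every $j$,
\[
2^{\frac13 j}\|\Delta_{j}v(t)\|_{L^{3}(\mathbb{T}^{d})}\leq \|\Delta_{j}v(t)\|_{L^{2}(\mathbb{T}^{d})}^{1-\frac p3}\B[\,2^{\frac1p j}\|\Delta_{j}v(t)\|_{L^{q}(\mathbb{T}^{d})}\B]^{\frac p3}.
\]
Taking the supremum over $j$, using $\|\Delta_{j}v\|_{L^{2}}\lesssim\|v\|_{L^{2}}$, raising to the third power and integrating over $(0,T)$ (the $L^{2}$-factor being absorbed by $v\in L^{\infty}_{t}L^{2}$) yields $v\in L^{3}(0,T;B^{1/3}_{3,\infty})$; moreover, since $v(t)\in B^{1/p}_{q,c(\mathbb{N})}$ for a.e.\ $t$, the right-hand side above $\to0$ as $j\to\infty$, so in fact $v\in L^{3}(0,T;B^{1/3}_{3,c(\mathbb{N})}(\mathbb{T}^{d}))$ and \eqref{eng} follows from Cheskidov--Constantin--Friedlander--Shvydkoy \cite{[CCFS]}. (Equivalently one may reproduce the Littlewood--Paley argument of \cite{[CCFS]} with the exponents $p,q$ in place of $3,3$, as indicated in the Introduction.)

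Part (2) runs identically on Cesàro means: raising the displayed inequality to the third power, averaging over $1\leq j\leq N$, and using $\|\Delta_{j}v\|_{L^{2}}\leq\|v\|_{L^{2}}$ together with $3-p\geq0$, one finds the Cesàro means $\frac1N\sum_{j\leq N}\B(2^{\frac13 j}\|\Delta_{j}v(t)\|_{L^{3}}\B)^{3}$ dominated by $\|v(t)\|_{L^{2}}^{3-p}$ times the Cesàro means of $\B(2^{\frac1p j}\|\Delta_{j}v(t)\|_{L^{q}}\B)^{p}$; hence the vanishing condition built into $\underline{B}^{1/p}_{q,VMO}$ propagates (in its Littlewood--Paley form) to the one defining $\underline{B}^{1/3}_{3,VMO}$, and \eqref{eng} follows from Fjordholm--Wiedemann \cite{[FW2018]}; at $p=3$ this is stronger than Part (1), consistent with \eqref{includ}. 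For Part (3), write $r=\tfrac{2dp}{(d+2)(p-1)}$, so that $1<r<\infty$ for $1<p\leq3$ and $d(\tfrac1r-\tfrac1q)-1=-\tfrac1p$; Bernstein's inequality $\|\Delta_{j}w\|_{L^{q}}\lesssim2^{jd(1/r-1/q)}\|\Delta_{j}w\|_{L^{r}}$ (note $q>r$) together with $\|\Delta_{j}v\|_{L^{r}}\lesssim2^{-j}\|\Delta_{j}\nabla v\|_{L^{r}}$ gives, for $j\geq1$,
\[
2^{\frac1p j}\|\Delta_{j}v(t)\|_{L^{q}(\mathbb{T}^{d})}\lesssim\|\Delta_{j}\nabla v(t)\|_{L^{r}(\mathbb{T}^{d})}.
\]
Since $1<r<\infty$, every $g\in L^{r}(\mathbb{T}^{d})$ satisfies $\|\Delta_{j}g\|_{L^{r}}\to0$ as $j\to\infty$ (density of trigonometric polynomials and the uniform $L^{r}$-boundedness of $\Delta_{j}$); applied to $\nabla v(t)$, and with the finitely many low-frequency blocks bounded by $\|v(t)\|_{L^{2}}$, this yields $v(t)\in B^{1/p}_{q,c(\mathbb{N})}(\mathbb{T}^{d})$ for a.e.\ $t$ and $\|v\|_{L^{p}_{t}B^{1/p}_{q,\infty}}\lesssim\|\nabla v\|_{L^{p}_{t}L^{r}}+\|v\|_{L^{\infty}_{t}L^{2}}<\infty$, so Part (3) reduces to Part (1). (Alternatively, one proves Part (3) by the Constantin--E--Titi regularisation on $\mathbb{T}^{d}$ in the spirit of \cite{[WMH],[WY]}, bounding the mollified energy flux by the commutator together with $\|\delta_{\xi}v\|_{L^{r}}\leq|\xi|\|\nabla v\|_{L^{r}}$ and the Sobolev embedding $W^{1,r}\hookrightarrow L^{dr/(d-r)}$.)

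The interpolation and the time integration are routine once the exponents are matched; the genuine content is the passage to the $c(\mathbb{N})$ (respectively $VMO$) class. In Part (3) this rests on the fact that $\|\Delta_{j}g\|_{L^{r}}\to0$ for $g\in L^{r}(\mathbb{T}^{d})$ with $1<r<\infty$ — precisely where the restriction $p>1$ (equivalently $r<\infty$) enters — and on transferring this a.e.-in-$t$ statement through $\int_{0}^{T}$, where $v\in L^{\infty}_{t}L^{2}$ provides a dominating function. In the direct route for Part (3) the analogous obstacle is the endpoint $p=3$: there the natural Hölder exponent in the flux estimate equals exactly $1/3$, so the crude bound yields only boundedness rather than decay, and one must again invoke a $c(\mathbb{N})$-type refinement — which is why reducing Part (3) to Part (1) is the cleaner path.
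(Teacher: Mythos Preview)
Your arguments for Parts~(1) and~(3) are correct but take a different route from the paper. The paper proves all three parts of Theorem~\ref{the1.2} via spatial mollification and the Constantin--E--Titi commutator machinery (Lemmas~\ref{lem2.2}--\ref{lem2.4}), whereas you reduce --- by $L^{2}$--$L^{q}$ interpolation in~(1), by Bernstein in~(3) --- to the known $L^{3}_{t}B^{1/3}_{3,c(\mathbb{N})}$ criterion of~\cite{[CCFS]}. Both approaches are valid; yours is shorter but relies on quoting~\cite{[CCFS]} as a black box, while the paper's is self-contained and showcases Lemma~\ref{lem2.3}, which is designed precisely to exploit the $L^{\infty}_{t}L^{2}$ bound \emph{inside} the commutator estimate rather than via an a~posteriori interpolation.

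Part~(2), however, has a genuine gap. The space $\underline{B}^{\alpha}_{q,VMO}$ is defined in the paper (and in~\cite{[FW2018]}) in purely physical-space terms --- via the vanishing as $\varepsilon\to0$ of $\varepsilon^{-\alpha}\big(\int_{\mathbb{T}^{d}}\fint_{B_{\varepsilon}(0)}|f(x)-f(x-y)|^{q}\,dy\,dx\big)^{1/q}$ --- and there is no established characterisation of this condition as a Ces\`aro-mean statement on Littlewood--Paley blocks. Your inequality bounding the Ces\`aro means of $\big(2^{j/3}\|\Delta_{j}v\|_{L^{3}}\big)^{3}$ is correct as an inequality, but neither the hypothesis $v\in\underline{B}^{1/p}_{q,VMO}$ nor the target $v\in\underline{B}^{1/3}_{3,VMO}$ is known to be equivalent to such a frequency-averaged condition, so you have not connected the assumption to the Fjordholm--Wiedemann criterion. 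The interpolation strategy \emph{can} be salvaged by working directly in physical space: from $\|\delta_{y}v\|_{L^{3}}^{3}\leq\|\delta_{y}v\|_{L^{2}}^{3-p}\|\delta_{y}v\|_{L^{q}}^{p}$ one bounds the $L^{2}$ factor by $2\|v\|_{L^{2}}$, averages in~$y$ over $B_{\varepsilon}(0)$, and applies Jensen (using $p\leq q$) to land exactly on the $\underline{B}^{1/p}_{q,VMO}$ quantity; integrating in time then gives the reduction to~\cite{[FW2018]}. Alternatively --- and this is what the paper actually does --- one invokes the $VMO$ case of Lemma~\ref{lem2.3} (via Remark~\ref{rem2.1}) directly in the mollified energy-flux identity~\eqref{c27}, bypassing any reduction to~\cite{[FW2018]} altogether.
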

\begin{remark}The  criteria based on the gradient of the velocity here implies that
 $v\in L^{\f{5q}{5q-6}}(0,T; W^{1,q}(\mathbb{T}^3)) \ \text{with}\  q \geq 9/5$   guarantees the energy conservation.  Hence, this improves the recent energy conservation sufficient condition in \cite{[BG]}.
\end{remark}
\begin{remark}
According to inclusion relations \eqref{includ}, this theorem is an improvement of corresponding results by Berselli-Georgiadis in \cite{[BG]}.
\end{remark}
  As a byproduct, there holds
\begin{coro}\label{coro1.3}
Let $\Omega$ be the whole space $\mathbb{R}^{3}$ or the periodic domain $\mathbb{T}^{3}$. Suppose that $v$ is a Leray-Hopf weak solutions of the 3D Navier-Stokes on $\Omega$. Then the energy equality,  for $t\in[0,T)$,
\be\label{EI}
 \|v(t)\|_{L^{2}(\Omega)}^{2}+2 \int_{0}^{t }\|\nabla v\|_{L^{2}(\Omega)}^{2}ds= \|v_0\|_{L^{2}(\Omega)}^{2}.
\ee
is valid if
\be \label{NSc}
v\in L^{p}(0,T;B^{\f1p}_{\f{2p}{p-1},\infty}(\Omega)),1<p\leq3.
\ee
\end{coro}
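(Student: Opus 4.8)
The plan is to deduce the energy equality \eqref{EI} from the critical Onsager-type membership $v\in L^{3}(0,T;B^{1/3}_{3,\infty}(\Omega))$, which the hypothesis forces once the a priori bounds of a Leray--Hopf solution are taken into account, and then to run the Navier--Stokes variant of the Cheskidov--Constantin--Friedlander--Shvydkoy argument \cite{[CCFS]}; the novelty compared with the inviscid Theorems~\ref{the1.1}--\ref{the1.2} is that the viscous dissipation plays the role there played by the $c(\mathbb{N})$--summability.

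For the reduction, recall that a Leray--Hopf solution obeys $v\in L^{\infty}(0,T;L^{2}(\Omega))\cap L^{2}(0,T;\dot H^{1}(\Omega))$. Inserting $v$ into the interpolation inequality
\[
2^{\frac13 j}\|\Delta_{j}v\|_{L^{3}(\Omega)}\le\|\Delta_{j}v\|_{L^{2}(\Omega)}^{1-\frac{p}{3}}\Big(2^{\frac1p j}\|\Delta_{j}v\|_{L^{2p/(p-1)}(\Omega)}\Big)^{\frac{p}{3}},\qquad 1<p\le3,
\]
which already underlies Theorems~\ref{the1.1}--\ref{the1.2}, then taking the supremum over $j$ and afterwards the $L^{3}_{t}$--norm, one obtains $\|v\|_{L^{3}(0,T;B^{1/3}_{3,\infty})}\le\|v\|_{L^{\infty}_{t}L^{2}}^{1-p/3}\,\|v\|_{L^{p}_{t}B^{1/p}_{2p/(p-1),\infty}}^{p/3}<\infty$. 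Hence it is enough to prove \eqref{EI} under the single assumption $v\in L^{3}(0,T;B^{1/3}_{3,\infty}(\Omega))$. (For $1<p<3$ the exponent $1-p/3$ is positive and $\|\Delta_{j}v\|_{L^{2}(\Omega)}\to0$ as $j\to\infty$ at a.e. time, so one even lands in $L^{3}(0,T;B^{1/3}_{3,c(\mathbb{N})})$; the genuine relaxation from $c(\mathbb{N})$ to ``$\infty$'' with respect to Theorems~\ref{the1.1}--\ref{the1.2} occurs only at $p=3$ and is paid for by the viscosity.)

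Next, mollify in space --- on $\mathbb{T}^{3}$ by a Fourier cut--off, or use the Littlewood--Paley projections as in \cite{[CCFS]} --- and put $v^{\varepsilon}=v\ast\rho_{\varepsilon}$. Testing the mollified momentum equation against $v^{\varepsilon}$, and using $\operatorname{div}v^{\varepsilon}=0$ together with $\int_{\Omega}(v^{\varepsilon}\otimes v^{\varepsilon}):\nabla v^{\varepsilon}=0$, gives for every $t\in[0,T)$
\[
\tfrac12\|v^{\varepsilon}(t)\|_{L^{2}}^{2}+\int_{0}^{t}\|\nabla v^{\varepsilon}\|_{L^{2}}^{2}\,ds=\tfrac12\|v_{0}^{\varepsilon}\|_{L^{2}}^{2}+\int_{0}^{t}\!\!\int_{\Omega}r^{\varepsilon}(v,v):\nabla v^{\varepsilon}\,dx\,ds,\qquad r^{\varepsilon}(v,v):=(v\otimes v)^{\varepsilon}-v^{\varepsilon}\otimes v^{\varepsilon}.
\]
As $\varepsilon\to0$ one has $v^{\varepsilon}(t)\to v(t)$ and $v_{0}^{\varepsilon}\to v_{0}$ in $L^{2}$, and $\nabla v^{\varepsilon}\to\nabla v$ in $L^{2}((0,T)\times\Omega)$ --- the latter being exactly the regularity of a Leray--Hopf solution; hence the left-hand side and the first term on the right converge, and \eqref{EI} is reduced to proving that the commutator integral $\int_{0}^{t}\int_{\Omega}r^{\varepsilon}(v,v):\nabla v^{\varepsilon}$ tends to $0$.

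Finally, for the commutator the Constantin--E--Titi estimate gives $\|r^{\varepsilon}(v,v)(s)\|_{L^{2}}\lesssim\big(\sup_{|y|\le\varepsilon}\|v(\cdot+y,s)-v(\cdot,s)\|_{L^{3}}\big)\big(\sup_{|y|\le\varepsilon}\|v(\cdot+y,s)-v(\cdot,s)\|_{L^{6}}\big)$, and then the first factor is $\lesssim\varepsilon^{1/3}[v(s)]_{B^{1/3}_{3,\infty}}$ while the second is $\lesssim\|v(s)\|_{L^{6}}\lesssim\|\nabla v(s)\|_{L^{2}}$ by the three--dimensional Sobolev inequality, the viscosity entering once more. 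Pairing with $\|\nabla v^{\varepsilon}\|_{L^{2}}\le\|\nabla v\|_{L^{2}}$ and integrating, the commutator is dominated by $C\varepsilon^{1/3}\int_{0}^{T}[v]_{B^{1/3}_{3,\infty}}\|\nabla v\|_{L^{2}}^{2}\,ds$. I expect the heart of the proof to be the fact that this time integral is only borderline convergent: $[v]_{B^{1/3}_{3,\infty}}$ lies in $L^{3}_{t}$, and even in $L^{12/5}_{t}$ after using $[v]_{B^{1/3}_{3,\infty}}\lesssim[v]_{\dot B^{5/6}_{2,\infty}}\lesssim\|v\|_{L^{2}}^{1/6}\|\nabla v\|_{L^{2}}^{5/6}$, whereas $\|\nabla v\|_{L^{2}}^{2}$ is only in $L^{1}_{t}$. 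As in the dissipative case of \cite{[CCFS]}, this is resolved by replacing the crude pairing with the frequency--localized flux $\Pi_{N}=\int_{\Omega}\big(S_{N}(v\otimes v)-S_{N}v\otimes S_{N}v\big):\nabla S_{N}v$, estimating its high--high part by $2^{-N/3}[v]_{B^{1/3}_{3,\infty}}\big(\sum_{j>N}2^{2j}\|\Delta_{j}v\|_{L^{2}}^{2}\big)^{1/2}\|\nabla S_{N}v\|_{L^{2}}$, and splitting $[0,T]$ according to the size of $\|\nabla v(s)\|_{L^{2}}$, so as to exploit that the viscous tail $s\mapsto\sum_{j>N}2^{2j}\|\Delta_{j}v(s)\|_{L^{2}}^{2}$ converges to $0$ in $L^{1}(0,T)$ as $N\to\infty$. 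This dissipative decay, which substitutes for the $o(1)$--decay of $2^{j/3}\|\Delta_{j}v\|_{L^{3}}$ needed in the inviscid setting, is precisely what makes the critical class $B^{1/3}_{3,\infty}$ admissible; since no step above distinguishes $\Omega=\mathbb{R}^{3}$ from $\Omega=\mathbb{T}^{3}$, both assertions of the corollary follow at once.
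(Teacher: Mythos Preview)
Your reduction via the interpolation
\[
2^{\frac{j}{3}}\|\Delta_{j}v\|_{L^{3}}\le\|\Delta_{j}v\|_{L^{2}}^{\,1-\frac{p}{3}}\Big(2^{\frac{j}{p}}\|\Delta_{j}v\|_{L^{\frac{2p}{p-1}}}\Big)^{\frac{p}{3}}
\]
is exactly the mechanism the paper has in mind: the paper does \emph{not} give a standalone proof of Corollary~\ref{coro1.3}, but presents it as a byproduct of this interpolation (the same one driving Theorems~\ref{the1.1}--\ref{the1.2}) together with the result of Cheskidov--Luo~\cite{[CL]} that a Leray--Hopf solution lying in $L^{3}(0,T;B^{1/3}_{3,\infty})$ satisfies the energy equality. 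So your first two paragraphs match the paper precisely.

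Your final paragraph, however, is sketchier than it needs to be and the bookkeeping does not close as written. The ``high--high'' bound you record carries the three factors $[v]_{B^{1/3}_{3,\infty}}\in L^{3}_{t}$, $\big(\sum_{j>N}2^{2j}\|\Delta_{j}v\|_{L^{2}}^{2}\big)^{1/2}\in L^{2}_{t}$ and $\|\nabla S_{N}v\|_{L^{2}}\in L^{2}_{t}$, whose reciprocals sum to $\tfrac{1}{3}+\tfrac{1}{2}+\tfrac{1}{2}>1$; the time-splitting you propose does not explain how the remaining pieces of the flux are controlled on the ``bad'' set (and the aside that $[v]_{B^{1/3}_{3,\infty}}\in L^{12/5}_{t}$ from Leray--Hopf alone is weaker, not stronger, than the assumed $L^{3}_{t}$). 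A much cleaner route---essentially what \cite{[CL]} and the Navier--Stokes part of \cite{[CCFS]} do---is to combine two bounds on the \emph{full} flux $\Pi_{N}$: the pure $L^{3}$ CCFS estimate $|\Pi_{N}(s)|\le C\|v(s)\|_{B^{1/3}_{3,\infty}}^{3}$, which by hypothesis lies in $L^{1}(0,T)$ and serves as a dominating function; and the observation that for a.e.\ $s$ one has $v(s)\in H^{1}(\Omega)\hookrightarrow B^{1/2}_{3,2}(\Omega)\hookrightarrow B^{1/3}_{3,c(\mathbb{N})}(\Omega)$ (three-dimensional Bernstein), so the Euler-type CCFS argument already gives $\Pi_{N}(s)\to 0$ pointwise. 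Dominated convergence then yields $\int_{0}^{t}\Pi_{N}\,ds\to 0$ with no time splitting at all. This is the ``viscosity pays for $c(\mathbb{N})$'' phenomenon you correctly flag, but the payment happens through the a.e.\ spatial embedding $H^{1}\hookrightarrow B^{1/3}_{3,c(\mathbb{N})}$ rather than through an $L^{1}_{t}$-decay of the tail.
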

\begin{remark}
 It is worth remarking that the analogous result in this corollary can be found in \cite{[CL]}. The novel here is the following  observation \be\label{sembed}
\|u\|_{ L^{p}(0,T;B^{\f1p}_{\f{2p}{p-1},\infty}(\mathbb{R}^{3}))}\leq C\|\nabla u\|_{ L^{p}(0,T; L^{\f{6p}{5p-5}}(\mathbb{R}^{3}))}, 1<p\leq3,
\ee from which we know that this corollary immediately yields the energy equality sufficient condition in terms of gradient of the velocity in  space $L^{p}(0,T; L^{\f{6p}{5p-5}}(\mathbb{R}^{3}))$  derived in \cite{[BC],[WMH],[Zhang],[BY2],[Berselli]} for the 3D Navier-Stokes equations.
\end{remark}

Next, we turn our attention to the helicity conservation \eqref{he}  of the Euler equations. We briefly list the known results in this direction as follows:
the weak solutions $v$ of the 3D Euler equations conserve the helicity if one of the following conditions is satisfied
  \begin{itemize}
\item Chae \cite{[Chae]}:  $\omega\in L^{3}(0,T;B^{\alpha}_{\f95,\infty}) $ with $\alpha>\f13;$
\item Chae \cite{[Chae1]}:  $v \in L^{r_{1}}(0,T;\dot{B}^{\alpha}_{\f92,q})$ and
$\omega \in  L^{r_{2}}(0,T;\in \dot{B}^{\alpha}_{\f95,q})$, with $\alpha>\f13$, $q\in [2,\infty]$, $r_1\in [2,\infty]$, $r_2\in [1,\infty]$ and $\frac{2}{r_{1}}+\frac{1}{r_{2}}=1;$
\item Cheskidov-Constantin-Friedlander-Shvydkoy   \cite{[CCFS]}: $v\in L^{3}(0,T;B^{\f23}_{3,c(\mathbb{N})});$
\item De Rosa \cite{[De Rosa]}:  $v\in L^{2r}(0,T;W^{\theta, 2p})$ and  $ \omega\in L^{\kappa}(0,T;W^{\alpha, q})$ with  $\f{1}{p}+\f1q=\f{1}{r}+\f{1}{\kappa}$  and $2\theta+\alpha\geq1$;
\item \cite{[WWY22]}: $ v\in L^{k} (0,T;\dot{B}^{\alpha}_{p,c(\mathbb{N})} )\ \text{and}\  \omega\in L^{ \ell  } (0,T; \dot{B}^{\beta}_{q,\infty})$ with $\f2k+\f1\ell=1,\f2p+\f1q =1,2\alpha+\beta\geq1.$
\end{itemize}
For the helicity conservation of weak solutions of    the Euler equations, our results read as follows:
\begin{theorem}\label{the1.3} Let $ v$ be a  weak solution of  incompressible Euler equations \eqref{Euler} in the sense of Definition \ref{eulerdefi}  and $\omega\in C([0,T];L^{\f{3}{2}}(\mathbb{R}^{3}))$. Then for any $t\in [0,T]$,  the helicity  conservation  \eqref{he} holds provided that one of the following conditions is
satisfied
 \begin{enumerate}[(1)]
 \item $ v\in L^{p}(0,T;B^{\f2p}_{\f{2p}{p-1},
     c(\mathbb{N})}(\mathbb{R}^{3})),2<p\leq3;$
    \item $ \nabla v\in L^{p}(0,T;L^{\f{6p}{5p-7}}(\mathbb{R}^{3})), 2<p\leq3;  $
 \item $ v\in L^{k} (0,T; {B}^{\alpha}_{\ell,c(\mathbb{N})} )\ \text{and}\  \omega\in L^{   p  } (0,T; {B}^{\beta}_{q,\infty})$ with $\f\theta k+\f1 p=1,\f\theta\ell+\f1q =\f{\theta}{2},\theta\alpha+\beta\geq1;$
\item $ v\in L^{k} (0,T; {B}^{\alpha}_{\ell ,\infty} )\ \text{and}\  \omega\in L^{p } (0,T;  {B}^{\beta}_{q,c(\mathbb{N})})$ with $\f\theta k+\f1 p=1,\f\theta\ell+\f1q =\f{\theta}{2},\theta\alpha+\beta\geq1. $
     \end{enumerate}
\end{theorem}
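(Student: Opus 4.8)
The plan is to treat parts (3) and (4) as the core of the theorem and to obtain (1) and (2) from them. First, note that (1) is the special case $k=p$ (whence the constraint $\tfrac{\theta}{k}+\tfrac1p=1$ forces $\theta=p-1$), $\omega=\operatorname{curl}v$ of (3): taking $\alpha=\tfrac2p$, $\ell=q=\tfrac{2p}{p-1}$, $\beta=\tfrac2p-1$ — so that $\omega\in L^p(0,T;B^{2/p-1}_{2p/(p-1),c(\mathbb N)})\subset L^p(0,T;B^{2/p-1}_{2p/(p-1),\infty})$ — one checks $\tfrac\theta\ell+\tfrac1q=\tfrac{(p-1)p}{2p}=\tfrac{\theta}{2}$ and $\theta\alpha+\beta=(p-1)\tfrac2p+\tfrac2p-1=1$; in particular the case $p=3$ is exactly the sufficient condition $v\in L^3(0,T;B^{2/3}_{3,c(\mathbb N)})$ of \cite{[CCFS]}, so (1) may equivalently be reduced to this endpoint via a Bernstein interpolation, in the spirit of the energy theorems. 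Second, (2) follows from (1) by the homogeneous Sobolev embedding $\dot W^{1,\frac{6p}{5p-7}}(\mathbb R^3)\hookrightarrow\dot B^{2/p}_{\frac{2p}{p-1},r}(\mathbb R^3)$, which is valid because the differential dimensions agree, $1-\tfrac{3(5p-7)}{6p}=\tfrac2p-\tfrac{3(p-1)}{2p}$, and whose third index $r$ is finite, so that $\dot B^{2/p}_{\frac{2p}{p-1},r}\subset B^{2/p}_{\frac{2p}{p-1},c(\mathbb N)}$ once the harmless low-frequency block is restored using $v\in L^\infty(0,T;L^2)$.

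The Bernstein interpolation just mentioned is the elementary inequality
\be\label{Binterp}
2^{\frac23 j}\|\Delta_j v\|_{L^3(\mathbb R^3)}\le\|\Delta_j v\|_{L^2(\mathbb R^3)}^{1-\frac p3}\Big(2^{\frac2p j}\|\Delta_j v\|_{L^{\frac{2p}{p-1}}(\mathbb R^3)}\Big)^{\frac p3},\qquad 2<p\le3,
\ee
which follows from the convexity relation $\tfrac13=\tfrac{1-p/3}{2}+\tfrac{p/3}{2}\cdot\tfrac{p-1}{p}$ of the Lebesgue exponents and the matching $\tfrac23=\tfrac2p\cdot\tfrac p3$ of the frequency weights. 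Taking the $L^3(0,T)$ norm of \eqref{Binterp}, bounding the first factor on the right by $\|v\|_{L^\infty(0,T;L^2)}^{1-p/3}$ and applying H\"older in time to the rest gives $\|v\|_{L^3(0,T;B^{2/3}_{3,\infty})}\lesssim\|v\|_{L^\infty(0,T;L^2)}^{1-p/3}\|v\|_{L^p(0,T;B^{2/p}_{2p/(p-1),\infty})}^{p/3}$, while the $c(\mathbb N)$ tail is inherited because it survives raising to a positive power and multiplying by an $L^\infty$-bounded sequence; hence $v\in L^p(0,T;B^{2/p}_{2p/(p-1),c(\mathbb N)})$ implies $v\in L^3(0,T;B^{2/3}_{3,c(\mathbb N)})$, so $\omega=\operatorname{curl}v\in L^3(0,T;B^{-1/3}_{3,c(\mathbb N)})$ and the endpoint helicity criterion applies.

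For (3) and (4) the plan is a Constantin--E--Titi type commutator argument. Let $v^\varepsilon=v*\rho_\varepsilon$, $\omega^\varepsilon=\omega*\rho_\varepsilon=\operatorname{curl}v^\varepsilon$ with a standard mollifier $\rho_\varepsilon$. Testing the mollified vorticity equation \eqref{vorticityeq} against $v^\varepsilon$ and the mollified momentum equation \eqref{Euler} against $\omega^\varepsilon$ and adding (the pressure drops out against $\operatorname{div}\omega^\varepsilon=0$) yields the regularized helicity balance
\be\label{hbal}
\frac{d}{dt}\int_{\mathbb R^3}\omega^\varepsilon\cdot v^\varepsilon\,dx=\int_{\mathbb R^3}\big[(v\otimes\omega-\omega\otimes v)^\varepsilon-v^\varepsilon\otimes\omega^\varepsilon+\omega^\varepsilon\otimes v^\varepsilon\big]:\nabla v^\varepsilon\,dx+\int_{\mathbb R^3}\big[(v\otimes v)^\varepsilon-v^\varepsilon\otimes v^\varepsilon\big]:\nabla\omega^\varepsilon\,dx,
\ee
where $v^\varepsilon\otimes\omega^\varepsilon-\omega^\varepsilon\otimes v^\varepsilon$ and $v^\varepsilon\otimes v^\varepsilon$ have been subtracted off without affecting the integrals, since $\int(v^\varepsilon\otimes\omega^\varepsilon-\omega^\varepsilon\otimes v^\varepsilon):\nabla v^\varepsilon\,dx=0$ and $\int v^\varepsilon\otimes v^\varepsilon:\nabla\omega^\varepsilon\,dx=0$ by $\operatorname{div}v^\varepsilon=\operatorname{div}\omega^\varepsilon=0$ and the identity $(v^\varepsilon\cdot\nabla)v^\varepsilon=\nabla\tfrac{|v^\varepsilon|^2}{2}-v^\varepsilon\times\omega^\varepsilon$. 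Expanding the right side of \eqref{hbal} entrywise with $(fg)^\varepsilon-f^\varepsilon g^\varepsilon=r_\varepsilon(f,g)-(f-f^\varepsilon)(g-g^\varepsilon)$ exhibits the helicity flux as a sum of trilinear commutator integrals: in the first, $v$ occurs twice (once differentiated, in $\nabla v^\varepsilon$) and $\omega$ once; in the second, $v$ occurs twice and $\omega$ is differentiated once (in $\nabla\omega^\varepsilon$), so altogether two derivatives act on $v$ — which is the reason helicity needs roughly one more derivative of the velocity than the energy does. I would then estimate by Bernstein: a copy of $v$ at smoothness $\alpha$ in $L^\ell$ contributes a factor $\varepsilon^{\alpha}$, a differentiated copy an extra $\varepsilon^{-1}$, a copy of $\omega$ at smoothness $\beta$ in $L^q$ contributes $\varepsilon^{\beta}$, and the remaining copies of $v$ are interpolated against the energy norm $L^2$, with the split governed by $\theta$; the spatial H\"older inequality then closes exactly under $\tfrac\theta\ell+\tfrac1q=\tfrac\theta2$, the temporal one exactly under $\tfrac\theta k+\tfrac1p=1$, and the total power of $\varepsilon$ is $\theta\alpha+\beta-1\ge0$. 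If $\theta\alpha+\beta>1$ the flux is $O(\varepsilon^{\theta\alpha+\beta-1})\to0$; if $\theta\alpha+\beta=1$ one splits the flux sums at frequency $\sim\varepsilon^{-1}$ and invokes the $c(\mathbb N)$ hypothesis (on $v$ for (3), on $\omega$ for (4)), so that the high-frequency tail is $o(1)$ while the low-frequency part vanishes as $\varepsilon\to0$. Passing to the limit $\varepsilon\to0$ in \eqref{hbal} and identifying $\lim_{\varepsilon\to0}\int\omega^\varepsilon\cdot v^\varepsilon\,dx$ with $\int\omega\cdot v\,dx$ continuously in $t$ by means of $\omega\in C([0,T];L^{3/2}(\mathbb R^3))$ and $v\in L^\infty(0,T;L^2(\mathbb R^3))$ yields \eqref{he}.

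The main obstacle is the three-parameter H\"older bookkeeping for the flux in \eqref{hbal}: one must juggle the time integrability, the spatial integrability and the differentiability of both $v$ and $\omega$ so that the constraints $\tfrac\theta k+\tfrac1p=1$ and $\tfrac\theta\ell+\tfrac1q=\tfrac\theta2$ come out to be precisely what the estimate demands, and within this the critical regime $\theta\alpha+\beta=1$ is delicate, since there the crude bound on the helicity flux is only $O(1)$: one has to arrange that, for each fixed cutoff $M$, the part of the flux supported on frequencies $\le M$ tends to $0$ as $\varepsilon\to0$, while the part on frequencies $>M$ is bounded uniformly in $\varepsilon$ by a quantity tending to $0$ as $M\to\infty$, which is exactly where the $c(\mathbb N)$ summability is used (the helicity analogue of the device of \cite{[CCFS]}). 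A secondary technical point, specific to $\mathbb R^3$ rather than $\mathbb T^3$, is the passage between homogeneous and inhomogeneous Besov norms and the control of low frequencies and of decay at infinity; this is handled precisely by the standing hypotheses $v\in L^\infty(0,T;L^2)$ and $\omega\in C([0,T];L^{3/2})$, which moreover make $\int_{\mathbb R^3}\omega\cdot v\,dx$ well defined and continuous in $t$, so that \eqref{he} is meaningful for every $t\in[0,T]$.
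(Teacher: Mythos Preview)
Your overall strategy is sound and the reductions you outline for (1) and (2) are correct, but your route for (3)--(4) differs from the paper's in one structural way that deserves comment.

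For part (1) your interpolation \eqref{Binterp} is exactly the paper's device: it reduces to the endpoint $L^3(0,T;B^{2/3}_{3,c(\mathbb N)})$ of \cite{[CCFS]}. For part (2) you deduce it from (1) via the embedding $\dot W^{1,\frac{6p}{5p-7}}\hookrightarrow \dot B^{2/p}_{\frac{2p}{p-1},r}$ with finite $r$; the paper instead gives a direct argument (Gagliardo--Nirenberg to place $v\in L^{\frac{p}{p-2}}(0,T;L^{\frac{3p}{7-2p}})$, then the Lebesgue commutator Lemma~\ref{lem2.4} makes the flux vanish). Both are valid; your route makes the logical dependence transparent, the paper's avoids invoking Besov--Sobolev embedding theory.

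For (3)--(4) the paper does \emph{not} keep your two flux terms in \eqref{hbal}. It uses the vector identity $\omega\times v=\operatorname{div}(v\otimes v)-\tfrac12\nabla|v|^2$ (applied at the regularized level) to collapse the balance to the single term
\[
\frac{d}{dt}\int v^\varepsilon\cdot\omega^\varepsilon\,dx
=2\int\big[(v\otimes v)^\varepsilon-v^\varepsilon\otimes v^\varepsilon\big]:\nabla\omega^\varepsilon\,dx.
\]
This matters: the commutator here contains \emph{two} copies of a $v$-difference, so the $\theta$-split is the pointwise H\"older trick $|\delta v|^2=|\delta v|^{\theta}|\delta v|^{2-\theta}$ of Lemma~\ref{lem2.3}, giving $o(\varepsilon^{\theta\alpha})$ directly with the $(2-\theta)$-part placed in $L^\infty_tL^2_x$. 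In your first flux term the commutator is $(v\otimes\omega)^\varepsilon-v^\varepsilon\otimes\omega^\varepsilon$, which carries only \emph{one} $v$-difference; the pointwise split is unavailable, and getting the required $\varepsilon^{\theta_1\alpha}$ from a single factor forces you through a genuine Besov interpolation $[L^\infty(L^2),L^k(B^\alpha_\ell)]_{\theta_1}$. That does close (the exponents you need are exactly $\tfrac{\theta}{k}+\tfrac1p=1$, $\tfrac{\theta}{\ell}+\tfrac1q=\tfrac{\theta}{2}$ with $\theta=\theta_1+\theta_2$), so your sketch is not wrong, but it is more involved than the paper's one-term reduction. A separate stylistic difference is that on $\mathbb R^3$ the paper works with Littlewood--Paley cutoffs $S_N$ rather than mollifiers; the dominated-convergence argument for the borderline $\theta\alpha+\beta=1$ case is then the convolution estimate $(\Gamma\ast d_j)(N)\to 0$ with $\Gamma\in\ell^1$, which is the $c(\mathbb N)$ mechanism you describe in frequency language.
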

\begin{theorem}\label{the1.4} Let $ v$ be a  weak solution of  incompressible Euler equations \eqref{Euler} in the sense of Definition \ref{eulerdefi}  and $\omega\in C([0,T];L^{\f{3}{2}}(\mathbb{T}^{3}))$. Then for any $t\in [0,T]$,  the helicity  conservation  \eqref{he} holds provided that one of the following conditions is
	satisfied
 \begin{enumerate}[(1)]
   \item $ v\in L^{p}(0,T;B^{\f2p}_{\f{2p}{p-1},c(\mathbb{N})}
       (\mathbb{T}^{3})),2<p\leq3;$
   \item $ v\in L^{p}(0,T;\underline{B}^{\f2p}_{\f{2p}{p-1},VMO}(\mathbb{T}^{3})),
       2<p\leq3;$
   \item$ \nabla v\in L^{p}(0,T;L^{\f{6p}{5p-7}}(\mathbb{T}^{3})),2<p\leq3;$
 \item $ v\in L^{k} (0,T; {B}^{\alpha}_{\ell,c(\mathbb{N})} )\ \text{and}\  \omega\in L^{   p  } (0,T;  {B}^{\beta}_{q,\infty})$ with $\f\theta k+\f1 p=1,\f\theta\ell+\f1q =\f{\theta}{2},\theta\alpha+\beta\geq1;$
\item $ v\in L^{k} (0,T; {B}^{\alpha}_{\ell ,\infty} )\ \text{and}\  \omega\in L^{p } (0,T; {B}^{\beta}_{q,c(\mathbb{N})})$ with $\f\theta k+\f1 p=1,\f\theta\ell+\f1q =\f{\theta}{2},\theta\alpha+\beta\geq1. $
     \end{enumerate}
\end{theorem}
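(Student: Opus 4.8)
The plan is to run the Constantin--E--Titi mollification scheme adapted to the helicity, exactly as for Theorem~\ref{the1.3} and in the spirit of \cite{[CCFS],[Chae],[WWY22]}; on $\mathbb{T}^{3}$ the only changes are that the mollifier is periodized and that the endpoint results of \cite{[CCFS]} and \cite{[FW2018]} may be invoked directly. First I would mollify \eqref{Euler} and the divergence form $\partial_t\omega_i+\partial_j(v_j\omega_i-\omega_j v_i)=0$ of \eqref{vorticityeq}, test the mollified momentum equation with $\omega^\epsilon:=\omega\ast\rho_\epsilon$ and the mollified vorticity equation with $v^\epsilon:=v\ast\rho_\epsilon$, use $\Div v^\epsilon=\Div\omega^\epsilon=0$ so that the pressure term vanishes, and cancel the smooth part as in the classical derivation of \eqref{he}, to reach
\be\label{plan-flux}
\f{d}{dt}\int_{\mathbb{T}^{3}}v^\epsilon\cdot\omega^\epsilon\,dx=\int_{\mathbb{T}^{3}}r_\epsilon(v_i,v_j)\,\partial_j\omega_i^\epsilon\,dx+\int_{\mathbb{T}^{3}}\partial_j v_i^\epsilon\,\big[r_\epsilon(v_j,\omega_i)-r_\epsilon(\omega_j,v_i)\big]\,dx=:H_\epsilon(t),
\ee
where $r_\epsilon(f,g):=(fg)^\epsilon-f^\epsilon g^\epsilon$. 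Because $\omega\in C([0,T];L^{3/2}(\mathbb{T}^{3}))$ and, by the hypotheses and Definition~\ref{eulerdefi}, $v$ has enough spatial integrability to pair with $L^{3/2}$, the helicity $\int v\cdot\omega$ is well defined, continuous in $t$, and is the uniform-in-$t$ limit of $\int v^\epsilon\cdot\omega^\epsilon$; it then suffices to prove $H_\epsilon\to0$ in $L^{1}(0,T)$ under each of (1)--(5).

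Next I would record the master flux bound. From $\|r_\epsilon(f,g)\|_{L^{a}}\lesssim\sup_{|\xi|\le\epsilon}\|\delta_\xi f\|_{L^{a_1}}\|\delta_\xi g\|_{L^{a_2}}$ with $\tfrac1a=\tfrac1{a_1}+\tfrac1{a_2}$, and $\|\nabla v^\epsilon\|_{L^{a}}\lesssim\epsilon^{-1}\sup_{|\xi|\le\epsilon}\|\delta_\xi v\|_{L^{a}}$, $\|\nabla\omega^\epsilon\|_{L^{a}}\lesssim\epsilon^{-1}\sup_{|\xi|\le\epsilon}\|\delta_\xi\omega\|_{L^{a}}$, every term of \eqref{plan-flux} is dominated, at fixed $t$, by
\be\label{plan-est}
|H_\epsilon(t)|\lesssim\epsilon^{-1}\B(\sup_{|\xi|\le\epsilon}\|\delta_\xi v\|_{L^{\ell}}\B)^{2}\,\sup_{|\xi|\le\epsilon}\|\delta_\xi\omega\|_{L^{q}},\qquad\f2\ell+\f1q=1,
\ee
together with the variant in which one or both factors $\|\delta_\xi v\|_{L^{\ell}}$ are replaced by $\|\delta_\xi v\|_{L^{2}}$. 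This already settles the balanced two-space case $\theta=2$ (thereby recovering \cite{[WWY22]}): if $v\in L^{k}_tB^{\alpha}_{\ell}$, $\omega\in L^{p}_tB^{\beta}_{q}$ with $\tfrac2k+\tfrac1p=1$, $\tfrac2\ell+\tfrac1q=1$, $2\alpha+\beta\ge1$, then the increments satisfy $\sup_{|\xi|\le\epsilon}\|\delta_\xi v\|_{L^{\ell}}\lesssim\epsilon^{\alpha}$, $\sup_{|\xi|\le\epsilon}\|\delta_\xi\omega\|_{L^{q}}\lesssim\epsilon^{\beta}$ with one of the two factors carrying a vanishing $c(\mathbb{N})$ profile and the other bounded, so $|H_\epsilon(t)|\lesssim\epsilon^{2\alpha+\beta-1}(\cdots)\to0$ pointwise; the right side of \eqref{plan-est} is integrable in $t$ by H\"older and $\tfrac2k+\tfrac1p=1$, and no smallness is needed when $2\alpha+\beta>1$.

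I would then reduce (1)--(3) to endpoint results. For (1), a weak solution has $v\in L^{\infty}(0,T;L^{2}(\mathbb{T}^{3}))$, so the interpolation inequality
\be\label{plan-interp}
2^{\f{2j}{3}}\|\Delta_j v\|_{L^{3}(\mathbb{T}^{3})}\le\|\Delta_j v\|_{L^{2}(\mathbb{T}^{3})}^{1-\f p3}\B[\,2^{\f{2j}{p}}\|\Delta_j v\|_{L^{\f{2p}{p-1}}(\mathbb{T}^{3})}\,\B]^{\f p3},\qquad 2<p\le3,
\ee
together with H\"older in $t$ and dominated convergence (the tail $\sup_{j\ge N}2^{\f{2j}{p}}\|\Delta_j v(t)\|_{L^{2p/(p-1)}}$ tends to $0$ in $L^{p}_t$ by the $c(\mathbb{N})$ hypothesis), yields $v\in L^{3}(0,T;B^{\f23}_{3,c(\mathbb{N})}(\mathbb{T}^{3}))$, whence the helicity is conserved by Cheskidov--Constantin--Friedlander--Shvydkoy \cite{[CCFS]}; alternatively one repeats their Littlewood--Paley argument with $\omega$ replaced by $v$ and the H\"older pair rebalanced. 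Case (2) is identical with $c(\mathbb{N})$ replaced by $VMO$, since \eqref{plan-interp} respects the defining averaging condition of $\underline{B}^{\f2p}_{\f{2p}{p-1},VMO}$, and reduces to the $\underline{B}^{\f23}_{3,VMO}$ statement in the manner of \cite{[FW2018]}. Case (3) follows from the Sobolev embedding $W^{1,\f{6p}{5p-7}}(\mathbb{T}^{3})\hookrightarrow B^{\f2p}_{\f{2p}{p-1},c(\mathbb{N})}(\mathbb{T}^{3})$ — legitimate because $\f2p=1-\f{3(5p-7)}{6p}+\f{3(p-1)}{2p}$, because $\f{2p}{p-1}\ge\f{6p}{5p-7}$ when $p\ge2$, and because $\|\Delta_j g\|_{L^{r}}\to0$ as $j\to\infty$ for every $g\in L^{r}$ with $1<r<\infty$ — so (3) reduces to (1).

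Finally, for (4) I would interpolate $v\in L^{k}_tB^{\alpha}_{\ell,c(\mathbb{N})}\cap L^{\infty}_tL^{2}$ into $v\in L^{\f{2k}{\theta}}_tB^{\f{\theta\alpha}{2}}_{\tilde\ell,c(\mathbb{N})}$ with $\tfrac1{\tilde\ell}=\tfrac{2-\theta}{4}+\tfrac{\theta}{2\ell}$; the relations $\tfrac\theta k+\tfrac1p=1$, $\tfrac\theta\ell+\tfrac1q=\tfrac\theta2$, $\theta\alpha+\beta\ge1$ of the hypothesis are exactly those turning $\big(L^{2k/\theta}_tB^{\theta\alpha/2}_{\tilde\ell,c(\mathbb{N})},\,L^{p}_tB^{\beta}_{q,\infty}\big)$ into a balanced pair, so the case $\theta=2$ applies; (5) is symmetric, interpolating $v\in L^{k}_tB^{\alpha}_{\ell,\infty}\cap L^{\infty}_tL^{2}$ and keeping $\omega\in L^{p}_tB^{\beta}_{q,c(\mathbb{N})}$. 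The hard part will be the helicity flux \eqref{plan-flux} itself: it is cubic and carries one more derivative than the energy flux of Theorems~\ref{the1.1}--\ref{the1.2}, so the commutators must be organized so that the surviving gradient always falls on a \emph{mollified} factor (producing the harmless $\epsilon^{-1}$) and the smooth-part cancellation must be used before any estimate; carrying this out on $\mathbb{T}^{3}$ with a periodized mollifier, while verifying that the $c(\mathbb{N})$ and $VMO$ structures survive \eqref{plan-interp} and that $\int v^\epsilon\cdot\omega^\epsilon\to\int v\cdot\omega$ can be justified using only $\omega\in C([0,T];L^{3/2})$, is the technical heart of the proof.
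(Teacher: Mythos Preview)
Your proposal is essentially correct but takes a genuinely different route from the paper. Two structural differences stand out. First, using $\omega\times v=\Div(v\otimes v)-\tfrac12\nabla|v|^{2}$ and two integrations by parts, the paper collapses your flux identity to the single-commutator form
\[
\f{d}{dt}\int_{\mathbb{T}^{3}}v^{\varepsilon}\cdot\omega^{\varepsilon}\,dx
=2\int_{\mathbb{T}^{3}}\big[(v\otimes v)^{\varepsilon}-v^{\varepsilon}\otimes v^{\varepsilon}\big]:\nabla\omega^{\varepsilon}\,dx,
\]
so the mixed commutators $r_{\varepsilon}(v,\omega)$ never appear; your extra terms are in fact controlled by the same master bound, so this is largely cosmetic, but the simplification means only the $v$--$v$ commutator has to be estimated. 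Second, and more substantially, the paper does \emph{not} reduce (1)--(5) to endpoint results: it estimates that single commutator directly via Lemma~\ref{lem2.3}, whose parameter $\theta\in(0,2]$ already encodes internally the interpolation with $v\in L^{\infty}_tL^{2}$ that you perform externally (your block-wise inequality for (1), and the passage to $L^{2k/\theta}_tB^{\theta\alpha/2}_{\tilde\ell}$ for (4)--(5)); for (3) the paper uses Gagliardo--Nirenberg to get $v\in L^{p/(p-2)}_tL^{3p/(7-2p)}_x$ and then Lemma~\ref{lem2.4} on $(v\times\omega)^{\varepsilon}-v^{\varepsilon}\times\omega^{\varepsilon}$, rather than your Sobolev-embedding reduction to (1). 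Your route is more modular---each case is reduced to an already proved statement---while the paper's is more uniform, running all five cases through one family of commutator bounds. One imprecision in your treatment of (2): your interpolation is written for Littlewood--Paley blocks $\Delta_j v$, whereas $\underline{B}^{\alpha}_{q,VMO}$ is defined through the averaged increment $\big(\int_{\mathbb{T}^{3}}\fint_{B_{\varepsilon}(0)}|v(x)-v(x-y)|^{q}\,dy\,dx\big)^{1/q}$; the reduction to $L^{3}_t\underline{B}^{2/3}_{3,VMO}$ is still correct, but should be justified by H\"older in the joint $(x,y)$ variable on that averaged integral, and you then need a helicity analogue of \cite{[FW2018]} in $\underline{B}^{2/3}_{3,VMO}$ which is not among your citations---the paper sidesteps both issues by handling the VMO case directly inside Lemma~\ref{lem2.3}(2) and Remark~\ref{rem2.1}.
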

\begin{remark}
These theorems   extend the known
sufficient class for implying the helicity of weak solutions in the Euler equations.
\end{remark}
There may be potential applications of the strategy of this paper. It is useful to study  physical  conserved quantities in other hydrodynamic equations. A possible candidate is the following EMHD equations in \cite{[Galtier],[WYY]}
\be\left\{\ba\label{E-MHD}
&h_{t}+ \nabla\times\B[(\nabla\times h)\times h\B]=0, \\
 & \Div h=0.
 \ea\right.\ee
It should be pointed out that the progress of  the negative part of Onsager conjecture can be found in \cite{[Isett],[DS0],[DS1]}.

The rest of the paper is organized as follows.
 In Section 2, we present some notations such as the definitions of  various spaces and  auxiliary lemmas which will be  frequently used throughout this paper. A kind of  the Constantin-E-
Titi type estimates concerning mollifier kernel
are established to allow us to  use the kinetic energy.
Section 3 is devoted to the study of energy conservation of weak solutions to ideal fluid for both the whole space and the periodic case.  We are concerned with the conservation law of
helicity of weak solutions to invicid flow in Section 4.   Eventually, in Section 5, we present some concluding remarks.

\section{Notations and some auxiliary lemmas} \label{section2}

{\bf Sobolev spaces:} First, we introduce some notations used in this paper.
 For $p\in [1,\,\infty]$, the notation $L^{p}(0,\,T;X)$ stands for the set of measurable functions on the interval $(0,\,T)$ with values in $X$ and $\|f(t,\cdot)\|_{X}$ belonging to $L^{p}(0,\,T)$. The classical Sobolev space $W^{k,p}(\Omega)$ is equipped with the norm $\|f\|_{W^{k,p}(\Omega)}=\sum\limits_{|\alpha| =0}^{k}\|D^{\alpha}f\|_{L^{p}(\Omega)}$, where $\Omega$ is either the whole space $\mathbb{R}^{d}$ or the periodic domain $\mathbb{T}^{d}$ with $d=2,3$. \\
  {\bf Besov spaces:} We denote $\mathcal{S}$  the Schwartz class of rapidly decreasing functions, $\mathcal{S}'$ the
space of tempered distributions  and $\mathcal{S}'/\mathcal{P}$ the quotient space of tempered distributions which modulo polynomials.
  We use $\mathcal{F}f$ or $\widehat{f}$ to denote the Fourier transform of a tempered distribution $f$.
To define Besov  spaces, we need the following dyadic unity partition(see e.g. \cite{[BCD]}). Choose two nonnegative radial
functions $\varrho$, $\varphi\in C^{\infty}(\mathbb{R}^{d})$
supported respectively in the ball $\mathcal{B}=\{\xi\in
\mathbb{R}^{d}:|\xi|\leq \frac{4}{3} \}$ and the shell $\mathcal{C}=\{\xi\in
\mathbb{R}^{d}: \frac{3}{4}\leq |\xi|\leq
  \frac{8}{3} \}$ such that
\begin{equation*}
 \varrho(\xi)+\sum_{j\geq 0}\varphi(2^{-j}\xi)=1, \quad
 \forall\xi\in\mathbb{R}^{d}; \qquad
 \sum_{j\in \mathbb{Z}}\varphi(2^{-j}\xi)=1, \quad \forall\xi\neq 0.
\end{equation*}
Write $h=\mathcal{F}^{-1} \varphi $ and $\tilde{h}=\mathcal{F}^{-1}\varrho$, then nonhomogeneous dyadic blocks  $\Delta_{j}$ are defined by
$$
\Delta_{j} u:=0 ~~ \text{if} ~~ j \leq-2, ~~ \Delta_{-1} u:=\varrho(D) u =\int_{\mathbb{R}^d}\tilde{h}(y)u(x-y)dy,$$
$$\text{and}~~\Delta_{j} u:=\varphi\left(2^{-j} D\right) u=2^{jd}\int_{\mathbb{R}^d}h(2^{j}y)u(x-y)dy  ~~\text{if}~~ j \geq 0.
$$
The nonhomogeneous low-frequency cut-off operator $S_j$ is defined by
$$
S_{j}u:= \sum_{k\leq j-1}\Delta_{k}u=\varrho(2^{-j}D)u=2^{jd}\int_{\mathbb{R}^d}\tilde{h}(2^{j}y)u(x-y)dy, ~j\in \mathbb{N}\cup {0}.$$
Analogously, the homogeneous dyadic blocks $\dot{\Delta}_{j}$ and homogeneous low-frequency cut-off operators $\dot{S}_j$ are  defined  for $ \forall j\in\mathbb{Z}$ by
\begin{equation*}
  \dot{\Delta}_{j}u:= \varphi(2^{-j}D)u=2^{jd}\int_{\mathbb{R}^d}h(2^{j}y)u(x-y)dy,~j\in \mathbb{Z}
\end{equation*}
$$ \text { and }~~ \dot{S}_{j}u:=\varrho(2^{-j}D)u=2^{jd}\int_{\mathbb{R}^d}\tilde{h}(2^{j}y)u(x-y)dy,~j\in \mathbb{Z}$$
Now we introduce the definition of Besov spaces. Let $(p, r) \in[1, \infty]^{2}, s \in \mathbb{R}$, the nonhomogeneous Besov space
$$
B_{p, r}^{s}:=\left\{f \in \mathcal{S}^{\prime}\left(\mathbb{R}^{d}\right) ;\|f\|_{B_{p, r}^{s}}:=\left\|2^{j s}\right\| \Delta_{j} f\left\|_{L^{p}}\right\|_{\ell^{r}(\mathbb{Z})}<\infty\right\}
$$
and the homogeneous space
$$
\dot{B}_{p, r}^{s}:=\left\{f \in \mathcal{S}^{\prime}\left(\mathbb{R}^{d}\right) / \mathcal{P}\left(\mathbb{R}^{d}\right) ;\|f\|_{\dot{B}_{p, r}^{s}}:=\left\|2^{j s}\right\| \dot{\Delta}_{j} f\left\|_{L^{p}}\right\|_{\ell^{r}(\mathbb{Z})}<\infty\right\} .
$$
Moreover, for $s>0$ and $1\leq p,q\leq \infty$, we may write the
equivalent norm below in
  the nonhomogeneous Besov norm $\norm{f}_{B^s_{p,q}}$ of $f\in \mathcal{S}^{'}$ as
$$\norm{f}_{B^s_{p,q}}=\norm{f}_{{L^p}}+\norm{f}_{\dot{B}^s_{p,q}}.$$
Motivated by \cite{[CCFS]}, we define $ {B}^\alpha _{p,c(\mathbb{N})}$ to be the class of all tempered distributions $f$ for which
\begin{equation}\label{2.1}
\norm{f}_{ {B}^\alpha _{p,\infty}}<\infty~ \text{and}~ 	\lim_{j\rightarrow \infty} 2^{j\alpha}\norm{ {\Delta}_j f}_{L^p}=0,~~\text{for any}~1\leq p\leq \infty.
\end{equation}

A function $f$   is said to be in
$L^{p}(0,T;\dot{B}^\alpha_{q,c(\mathbb{N})}(\Omega))$  if
$$
\lim_{z\rightarrow0}\f{\left(\int_{0}^{T}\B(\int_{\Omega}|f(x+z,t)-f(x,t)|^{q}dx\B)^{\f{p}{q}}dt\right)^{\frac{1}{p}}}{|z|^{\alpha}}=0.
$$
For $0<\alpha<1$, it is worth pointing out that
$$
\lim_{z\rightarrow0}\f{\left(\int_{0}^{T}\B(\int_{\Omega}
|f(x+z,t)-f(x,t)|^{p}dx\B)^{\f{p}{q}}dt\right)^{\frac{1}{p}}}{|z|^{\alpha}}=0
\Leftrightarrow f\in L^{p}(0,T;\dot{B}^\alpha_{p,c(\mathbb{N})}(\Omega).$$

 A function $f$ belongs to
the Besov-VMO space  $L^p(0,T;\underline{B}^{\alpha}_{q,VMO}(\mathbb{T}^d))$  if it satisfies
$$\|f\|_{L^p(0,T;L^q(\mathbb{T}^d))}<\infty,$$
and
$$\ba
&\lim_{\varepsilon\rightarrow0}\f{1}{\varepsilon^{\alpha}}\left(\int_0^T\B[\int_{\mathbb{T}^d} \fbxo|f(x)-f(y)|^{q}dydx \B]^{\f{p}{q}}dt\right)^{\f1p}\\
=&\lim_{\varepsilon\rightarrow0}\f{1}{\varepsilon^{\alpha}}\left(\int_0^T\B[\int_{\mathbb{T}^d} \fbxoo|f(x)-f(x-y)|^{q}dydx \B]^{\f{p}{q}}dt\right)^{\f{1}{p}}=0.
\ea$$

{\bf Mollifier kernel:} Let $\eta_{\varepsilon}:\mathbb{R}^{d}\rightarrow \mathbb{R}$ be a standard mollifier.i.e. $\eta(x)=C_0e^{-\frac{1}{1-|x|^2}}$ for $|x|<1$ and $\eta(x)=0$ for $|x|\geq 1$, where $C_0$ is a constant such that $\int_{\mathbb{R}^d}\eta (x) dx=1$. For $\varepsilon>0$, we define the rescaled mollifier $\eta_{\varepsilon}(x)=\frac{1}{\varepsilon^d}\eta(\frac{x}{\varepsilon})$ and for  any function $f\in L^1_{loc}(\mathbb{R}^d)$, its mollified version is defined as
$$f^\varepsilon(x)=(f*\eta_{\varepsilon})(x)=\int_{\mathbb{R}^d}f(x-y)\eta_{\varepsilon}(y)dy,\ \ x\in \mathbb{R}^d.$$
Next, we collect some Lemmas which will be used in the present paper.
\begin{lemma}(Bernstein inequality \cite{[BCD]})\label{berinequ}  Let $\mathcal{B}$ be a ball of $\mathbb{R}^{d}$, and $\mathcal{C}$ be a ring of $\mathbb{R}^{d}$. There exists a positive constant $C$ such that for all integer $k \geq 0$, all $1 \leq a \leq b \leq \infty$ and $u \in L^{a}\left(\mathbb{R}^{d}\right)$, the following estimates are satisfied:
$$
\begin{gathered}
\sup _{|\alpha|=k}\left\|\partial^{\alpha} u\right\|_{L^{b}\left(\mathbb{R}^{d}\right)} \leq C^{k+1} \lambda^{k+d\left(\frac{1}{a}-\frac{1}{b}\right)}\|u\|_{L^{a}\left(\mathbb{R}^{d}\right)}, \quad \operatorname{supp} \hat{u} \subset \lambda \mathcal{B}, \\
C^{-(k+1)} \lambda^{k}\|u\|_{L^{a}\left(\mathbb{R}^{d}\right)} \leq \sup _{|\alpha|=k}\left\|\partial^{\alpha} u\right\|_{L^{a}\left(\mathbb{R}^{d}\right)} \leq C^{k+1} \lambda^{k}\|u\|_{L^{a}\left(\mathbb{R}^{d}\right)}, \quad \operatorname{supp} \hat{u} \subset \lambda \mathcal{C}.
\end{gathered}
$$
\end{lemma}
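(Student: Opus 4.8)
Lemma~\ref{berinequ} is the classical Bernstein inequality, so the plan is simply to recall the standard argument. First I would normalize to frequency scale $\lambda=1$ via the rescaling $u\mapsto u(\lambda\,\cdot)$, which maps Fourier support in $\lambda\mathcal B$ (resp.\ $\lambda\mathcal C$) to Fourier support in $\mathcal B$ (resp.\ $\mathcal C$); a change of variables shows that every power of $\lambda$ occurring in the claimed inequalities is exactly the one forced by homogeneity, namely $\lambda^{k}$ from the $k$ derivatives and $\lambda^{d(1/a-1/b)}$ from the mismatch of Lebesgue exponents. Hence it suffices to prove the estimates when $\widehat u$ is supported in the fixed ball $\mathcal B$, resp.\ the fixed ring $\mathcal C$.

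For the first inequality, fix once and for all $\phi\in C_c^{\infty}(\mathbb R^{d})$ with $\phi\equiv 1$ on a neighbourhood of $\mathcal B$, so that $u=\check\phi*u$ with $\check\phi:=\mathcal F^{-1}\phi\in\mathcal S(\mathbb R^{d})$, and likewise $\partial^{\beta}u=\check\phi*\partial^{\beta}u$ for every $\beta$ since $\operatorname{supp}\widehat{\partial^{\beta}u}\subset\mathcal B$. The case $k=0$ is Young's convolution inequality, $\|u\|_{L^{b}}=\|\check\phi*u\|_{L^{b}}\le\|\check\phi\|_{L^{r}}\|u\|_{L^{a}}$ with $1+\tfrac1b=\tfrac1r+\tfrac1a$ (admissible because $a\le b$); then applying the first-order bound $\|\partial_{j}w\|_{L^{b}}=\|(\partial_{j}\check\phi)*w\|_{L^{b}}\le\|\partial_{j}\check\phi\|_{L^{1}}\|w\|_{L^{b}}$ iteratively $k$ times (to $w=u$, then $w=\partial_{j_{1}}u$, and so on, which is legitimate because each intermediate function still has Fourier support in $\mathcal B$) gives $\|\partial^{\alpha}u\|_{L^{b}}\le C^{k+1}\|u\|_{L^{a}}$ with $C$ depending only on $d$ and $\phi$. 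Undoing the scaling restores the factor $\lambda^{k+d(1/a-1/b)}$; the second inequality in the ring case is the special case $a=b$.

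For the lower bound in the ring case, after normalizing to $\mathcal C$ I would fix $\theta\in C_c^{\infty}(\mathbb R^{d}\setminus\{0\})$ with $\theta\equiv1$ on $\mathcal C$ and use the multinomial identity $1=|\xi|^{-2k}\sum_{|\alpha|=k}\binom{k}{\alpha}(\xi^{\alpha})^{2}$, valid for $\xi\neq0$, to write $\theta(\xi)=\sum_{|\alpha|=k}(i\xi)^{\alpha}g_{\alpha}(\xi)$ with $g_{\alpha}(\xi):=(-i)^{k}\binom{k}{\alpha}\xi^{\alpha}|\xi|^{-2k}\theta(\xi)\in C_c^{\infty}(\mathbb R^{d})$ (the singularity at the origin being killed by $\theta$). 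Applying $\mathcal F^{-1}$ and using that $\theta\equiv1$ on $\operatorname{supp}\widehat u$ yields $u=\sum_{|\alpha|=k}(\mathcal F^{-1}g_{\alpha})*\partial^{\alpha}u$, whence $\|u\|_{L^{a}}\le\big(\textstyle\sum_{|\alpha|=k}\|\mathcal F^{-1}g_{\alpha}\|_{L^{1}}\big)\sup_{|\alpha|=k}\|\partial^{\alpha}u\|_{L^{a}}$ by Young's inequality, and rescaling inserts the factor $\lambda^{-k}$.

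The one point requiring a little attention is that the constant should come out \emph{geometric} in $k$ (the form $C^{k+1}$) rather than, say, $C\,k!$. For the upper bound this is automatic from the iteration, with $C=\max\big(\|\check\phi\|_{L^{r}},\,\max_{j}\|\partial_{j}\check\phi\|_{L^{1}}\big)$. For the lower bound one uses $\sum_{|\alpha|=k}\binom{k}{\alpha}=d^{k}$, the fact that the number of multi-indices of length $k$ is polynomial in $k$, and the observation that $\|\mathcal F^{-1}g_{\alpha}\|_{L^{1}}$ is bounded by a fixed number (depending only on $d$) of derivatives of $g_{\alpha}$, each differentiation of $|\xi|^{-2k}\theta$ on the fixed ring costing only a polynomial-in-$k$ factor; multiplying these geometric and polynomial factors keeps the sum $\le C^{k+1}$. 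Everything else is scaling together with Young's inequality. Since the statement is quoted verbatim from \cite{[BCD]}, in the paper itself I would just cite it rather than reprove it.
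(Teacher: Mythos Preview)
Your proof sketch is correct and is the standard argument for Bernstein's inequality (scaling to $\lambda=1$, Young's inequality with a fixed multiplier for the upper bounds, and a parametrix built from $|\xi|^{-2k}\sum_{|\alpha|=k}\binom{k}{\alpha}(\xi^{\alpha})^{2}$ for the lower bound). As you correctly anticipated in your final sentence, the paper does not prove this lemma at all: it is simply quoted from \cite{[BCD]} and used as a black box, so there is nothing to compare against.
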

This lemma yields that if $1\leq p_{1}\leq p_{2} \leq\infty,1\leq r_{1}\leq r_{2} \leq\infty$, there holds
\be\label{besovinclu}
B^{s}_{p_{1},r_{1}}\subseteq B^{s-d(\f1p_{1}-\f1p_{2})}_{p_{2},r_{2}}
\ee
\begin{lemma}(\cite{[WWY22],[Chae],[Yu]})\label{lem2.2}
Let $\Omega$ denote the whole space $\mathbb{R}^{d}$ or the periodic domain $\mathbb{T}^{d}$.
 Suppose that  $f\in L^p(0,T; {B}^\alpha_{q,\infty}(\Omega))$, $g\in L^p(0,T;{B}^\beta_{q,c(\mathbb{N})}(\Omega))$ with $\alpha, \beta\in (0,1)$,  $ p,q\in [1,\infty]$,   then there holds that,  for any $k\in \mathbb{N}^+$, as $\varepsilon\rightarrow0,$
 \begin{enumerate}[(1)]
 \item $ \|f^{\varepsilon} -f \|_{L^{p}(0,T;L^{q}(\Omega))}\leq \text{O}(\varepsilon^{\alpha})\|f\|_{L^p(0,T;\dot{B}^\alpha_{q,\infty}(\Omega))}$;
   \item   $ \|\nabla^{k}f^{\varepsilon}  \|_{L^{p}(0,T;L^{q}(\Omega))}\leq \text{O}(\varepsilon^{\alpha-k})\|f\|_{L^p(0,T;\dot{B}^\alpha_{q,\infty}(\Omega))}$;
       \item $ \|g^{\varepsilon} -g \|_{L^{p}(0,T;L^{q}(\Omega))}\leq \text{o}(\varepsilon^{\beta})\|g\|_{L^p(0,T;\dot{B}^\beta_{q,c(\mathbb{N})}(\Omega))}$;
   \item   $ \|\nabla^{k}g^{\varepsilon}  \|_{L^{p}(0,T;L^{q}(\Omega))}\leq \text{o}(\varepsilon^{\beta-k})\|g
       \|_{L^p(0,T;\dot{B}^\beta_{q,c(\mathbb{N})}(\Omega))}$.
 \end{enumerate}
\end{lemma}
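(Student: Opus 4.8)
The plan is to prove all four bounds directly from the finite-difference description of the spaces, bypassing the Littlewood--Paley decomposition (a dyadic proof using the Bernstein inequality of Lemma~\ref{berinequ} and splitting the sum over frequencies at $2^{j}\sim1/\varepsilon$ would work just as well). I would rely on only two facts: first, that for $0<s<1$ the homogeneous Besov norm is comparable to a difference quotient, so that with $\tau_{y}h(x,t):=h(x-y,t)$ one has $\|\tau_{y}h-h\|_{L^{p}(0,T;L^{q}(\Omega))}\le C|y|^{s}\|h\|_{L^{p}(0,T;\dot{B}^{s}_{q,\infty}(\Omega))}$; and second, that, by the very definition of $L^{p}(0,T;\dot{B}^{\beta}_{q,c(\mathbb{N})}(\Omega))$ given in Section~\ref{section2}, the condition on $g$ means precisely $|y|^{-\beta}\|\tau_{y}g-g\|_{L^{p}(0,T;L^{q})}\to0$ as $y\to0$. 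I would carry out the whole-space case and then observe that the torus case is identical word for word, integrals over $\mathbb{R}^{d}$ being replaced by integrals over $\mathbb{T}^{d}$ (for $\varepsilon$ small the mollification of a periodic function is again periodic).

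For the non-derivative bounds (1) and (3), using $\int_{\mathbb{R}^{d}}\eta_{\varepsilon}=1$ I would write $f^{\varepsilon}-f=\int_{|y|<\varepsilon}\eta_{\varepsilon}(y)(\tau_{y}f-f)\,dy$ and apply Minkowski's integral inequality in the mixed norm $L^{p}_{t}L^{q}_{x}$ (valid since $p,q\ge1$) to get $\|f^{\varepsilon}-f\|_{L^{p}(0,T;L^{q})}\le\int_{|y|<\varepsilon}\eta_{\varepsilon}(y)\|\tau_{y}f-f\|_{L^{p}(0,T;L^{q})}\,dy\le C\varepsilon^{\alpha}\|f\|_{L^{p}(0,T;\dot{B}^{\alpha}_{q,\infty})}$, which is (1). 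For (3) the same identity bounds $\|g^{\varepsilon}-g\|_{L^{p}(0,T;L^{q})}$ by the corresponding integral in $g$; then, given $\delta>0$, the $c(\mathbb{N})$ hypothesis supplies $r_{0}>0$ with $\|\tau_{y}g-g\|_{L^{p}(0,T;L^{q})}\le\delta|y|^{\beta}$ for $|y|\le r_{0}$, so $\|g^{\varepsilon}-g\|_{L^{p}(0,T;L^{q})}\le\delta\varepsilon^{\beta}$ whenever $\varepsilon<r_{0}$; as $\delta$ is arbitrary this is the asserted $\text{o}(\varepsilon^{\beta})$.

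For the derivative bounds (2) and (4), I would write $g^{\varepsilon}(x)=\int g(z)\eta_{\varepsilon}(x-z)\,dz$, differentiate under the integral sign to obtain $\nabla^{k}g^{\varepsilon}(x)=\int(\nabla^{k}\eta_{\varepsilon})(y)\,g(x-y)\,dy$, and then use $\int_{\mathbb{R}^{d}}\nabla^{k}\eta_{\varepsilon}=0$ (valid for $k\ge1$ since $\eta\in C^{\infty}_{c}$) to insert the cancellation $\nabla^{k}g^{\varepsilon}(x)=\int_{|y|<\varepsilon}(\nabla^{k}\eta_{\varepsilon})(y)\,(g(x-y)-g(x))\,dy$, and likewise for $f$. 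This subtraction is the crucial step: it is what upgrades the crude bound $\|\nabla^{k}g^{\varepsilon}\|\lesssim\varepsilon^{-k}\|g\|$ to one carrying the gain $\varepsilon^{\alpha}$ (resp.\ $\varepsilon^{\beta}$). The change of variables $z=y/\varepsilon$ gives $\int_{|y|<\varepsilon}|(\nabla^{k}\eta_{\varepsilon})(y)|\,|y|^{s}\,dy=\varepsilon^{s-k}\int_{|z|<1}|(\nabla^{k}\eta)(z)|\,|z|^{s}\,dz$, which is finite for every $s>0$. Combining this with Minkowski as above yields $\|\nabla^{k}f^{\varepsilon}\|_{L^{p}(0,T;L^{q})}\le C\varepsilon^{\alpha-k}\|f\|_{L^{p}(0,T;\dot{B}^{\alpha}_{q,\infty})}$, which is (2); feeding in instead the $c(\mathbb{N})$ smallness $\|\tau_{y}g-g\|_{L^{p}(0,T;L^{q})}\le\delta|y|^{\beta}$ for $|y|\le r_{0}$ gives $\|\nabla^{k}g^{\varepsilon}\|_{L^{p}(0,T;L^{q})}\le C\delta\varepsilon^{\beta-k}$ for $\varepsilon<r_{0}$, i.e.\ the $\text{o}(\varepsilon^{\beta-k})$ of (4).

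The point needing most care is the $\text{o}$-versus-$\text{O}$ bookkeeping in (3) and (4): one must make sure the $c(\mathbb{N})$ hypothesis is invoked only through the statement that $\|\tau_{y}g-g\|_{L^{p}_{t}L^{q}_{x}}=\text{o}(|y|^{\beta})$, and that this smallness persists after integrating against $\eta_{\varepsilon}$ or $\nabla^{k}\eta_{\varepsilon}$, both supported in $\{|y|<\varepsilon\}$. I would also note explicitly that the hypothesis $\alpha,\beta\in(0,1)$ is used in an essential way, since only first-order differences occur; without it one would need finite differences of higher order in the definition of the Besov norms, exactly as in the remark following Theorem~\ref{the1.1}.
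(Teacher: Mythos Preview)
Your proof is correct. Note, however, that the paper does not actually give its own proof of Lemma~\ref{lem2.2}: the lemma is simply quoted with citations to \cite{[WWY22],[Chae],[Yu]}, and then Remark~\ref{rem2.1} is stated. So there is nothing in the paper to compare against beyond observing that the argument you sketch---writing $f^{\varepsilon}-f$ and $\nabla^{k}f^{\varepsilon}$ as integrals of $\tau_{y}f-f$ against $\eta_{\varepsilon}$ or $\nabla^{k}\eta_{\varepsilon}$, applying Minkowski, and invoking the difference-quotient characterization of $\dot{B}^{\alpha}_{q,\infty}$---is precisely the standard route taken in those cited references (and is also how the paper itself proves the related Lemma~\ref{lem2.3}). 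Your handling of the $\text{o}$-versus-$\text{O}$ distinction via the definition of $L^{p}(0,T;\dot{B}^{\beta}_{q,c(\mathbb{N})})$ given in Section~\ref{section2} is exactly right, as is your remark that the restriction $\alpha,\beta\in(0,1)$ is what allows first-order differences to suffice.
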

\begin{remark}\label{rem2.1}
The results still hold for $g\in L^p(0,T;\underline{B}^\beta_{q,VMO}(\mathbb{T}^{d}))$, whose proof is proposed  in \cite{[WYY],[BGSTW]}.
\end{remark}
Next, we will modify the Constantin-E-Titi type commutator estimates to allow us to take full use of  kinetic energy.
\begin{lemma} 	\label{lem2.3}
Let $\Omega$ denote the whole space $\mathbb{R}^{d}$ or the periodic domain $\mathbb{T}^{d}$.
	Assume that $0<\alpha <1$,$0<\theta\leq2$, $1\leq r,s,p_{1},p_{2}\leq\infty$, $\frac{1}{r}=\frac{1}{p_1}+\frac{1}{p_2}$ and $\frac{1}{s}=\frac{1}{q_1}+\frac{1}{q_2}$,
then as $\varepsilon\rightarrow0,$ there holds,
	\begin{align} \label{cet}
		\|(ff)^{\varepsilon}- f^{\varepsilon}f^{\varepsilon}\|_{L^r(0,T;L^s(\Omega))} \leq C\text{o}(\varepsilon^{\theta\alpha }),	
	\end{align}
if one of the following conditions is satisfied   \begin{enumerate}[(1)]
 \item
 $f\in L^{(2-\theta)p_{2}}(0,T;L^{(2-\theta)q_{2}}(\Omega))$ and $f\in L^{\theta p_{1}}(0,T;B_{\theta q_{1},c(\mathbb{N})}^{\alpha}(\Omega));$
  \item $f\in L^{(2-\theta)p_{2}}(0,T;L^{(2-\theta)q_{2}}(\Omega))$ and  $f\in L^{\theta p_{1}}(0,T;\underline{B}_{\theta q_{1},VMO}^{\alpha}(\Omega))$,$q_{2}\geq \f{ q_{1}}{q_{1}-1}$, $p_{2}\geq\f{q_{1}}{q_{1}-1}$ .
\end{enumerate}\end{lemma}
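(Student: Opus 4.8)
The estimate \eqref{cet} is a refinement of the classical Constantin--E--Titi commutator bound in which we trade two derivatives' worth of ``good'' regularity for $L^2$-type integrability, so that the kinetic energy $\|f\|_{L^\infty_t L^2_x}$ (or more generally an $L^{(2-\theta)p_2}_tL^{(2-\theta)q_2}_x$ norm) can absorb the deficit. The starting point is the standard pointwise identity
\[
(ff)^{\varepsilon}(x)-f^{\varepsilon}(x)f^{\varepsilon}(x)=\int_{\mathbb{R}^d}\eta_\varepsilon(y)\bigl(\delta_y f(x)\bigr)^{\!2}\,dy-\Bigl(\int_{\mathbb{R}^d}\eta_\varepsilon(y)\,\delta_y f(x)\,dy\Bigr)^{\!2},
\]
where $\delta_y f(x):=f(x-y)-f(x)$. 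Both terms on the right are controlled by $\int\eta_\varepsilon(y)|\delta_y f(x)|^2\,dy$ after Jensen's inequality on the second one. The key algebraic trick is then to split, for $0<\theta\le 2$,
\[
|\delta_y f(x)|^{2}=|\delta_y f(x)|^{\theta}\cdot|\delta_y f(x)|^{2-\theta},
\]
estimating the first factor by the Besov regularity and the second by the energy-type norm.

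First I would handle the spatial norms for a.e.\ fixed $t$. Applying the generalized H\"older inequality with $\tfrac1s=\tfrac1{q_1}+\tfrac1{q_2}$ to the product $|\delta_y f|^{\theta}\,|\delta_y f|^{2-\theta}$, integrating in $x$, and then taking the $L^s_x$ (really $L^{s}$ after raising to the appropriate power) yields a bound of the shape
\[
\bigl\|(ff)^{\varepsilon}-f^{\varepsilon}f^{\varepsilon}\bigr\|_{L^s_x}\le C\int_{\mathbb{R}^d}\eta_\varepsilon(y)\,\bigl\|\delta_y f\bigr\|_{L^{\theta q_1}_x}^{\theta}\,\bigl\|\delta_y f\bigr\|_{L^{(2-\theta)q_2}_x}^{2-\theta}\,dy,
\]
where the second factor is crudely bounded by $2\|f\|_{L^{(2-\theta)q_2}_x}^{2-\theta}$ and pulled out of the $y$-integral (since $|y|<\varepsilon$ on the support of $\eta_\varepsilon$). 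Next I would integrate in time: applying H\"older in $t$ with $\tfrac1r=\tfrac1{p_1}+\tfrac1{p_2}$ and distributing the exponents $\theta p_1$ onto the finite-difference factor and $(2-\theta)p_2$ onto the energy factor, one arrives at
\[
\bigl\|(ff)^{\varepsilon}-f^{\varepsilon}f^{\varepsilon}\bigr\|_{L^r_tL^s_x}\le C\,\|f\|_{L^{(2-\theta)p_2}_tL^{(2-\theta)q_2}_x}^{2-\theta}\;\Bigl\|\,|y|\mapsto\|\delta_y f\|_{L^{\theta p_1}_tL^{\theta q_1}_x}\,\Bigr\|_{\sup_{|y|<\varepsilon}}^{\theta}.
\]
Finally, the definition of $L^{\theta p_1}(0,T;\dot B^{\alpha}_{\theta q_1,c(\mathbb N)})$ says precisely that $\sup_{|y|<\varepsilon}\|\delta_y f\|_{L^{\theta p_1}_tL^{\theta q_1}_x}=\mathrm{o}(\varepsilon^{\alpha})$ as $\varepsilon\to 0$, so the whole expression is $C\,\mathrm{o}(\varepsilon^{\theta\alpha})$, which is \eqref{cet}. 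For case (2) the same scheme runs, but the finite-difference factor is replaced by its local-average ($\underline B^\alpha_{\cdot,VMO}$) analogue; this is where the extra hypotheses $q_2\ge q_1/(q_1-1)$ and $p_2\ge q_1/(q_1-1)$ enter, so that an additional H\"older step converting ball-averages $\fbxo|f(x)-f(y)|$ into the required Lebesgue exponents closes — Remark \ref{rem2.1} together with the torus version of Lemma \ref{lem2.2} supplies the needed $\mathrm{o}(\varepsilon^{\alpha})$ decay.

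The main obstacle is the bookkeeping of the exponents: one must verify that the split $2=\theta+(2-\theta)$ at the level of the integrand is compatible, after two applications of H\"older (once in $x$, once in $t$), with the prescribed relations $\tfrac1r=\tfrac1{p_1}+\tfrac1{p_2}$ and $\tfrac1s=\tfrac1{q_1}+\tfrac1{q_2}$ — i.e.\ that the powers $\theta q_1$, $(2-\theta)q_2$, $\theta p_1$, $(2-\theta)p_2$ that appear are exactly the ones named in the hypotheses, with no hidden constraint left over. The endpoint cases $\theta=2$ (which recovers the usual CET estimate with no energy factor) and the various $\infty$ exponents should be checked to make sure the H\"older inequalities degenerate gracefully. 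Pulling the energy factor out of the $y$-integral uses only that $\eta_\varepsilon$ is a probability density supported in $B_\varepsilon$, so no regularity of $\eta$ beyond $L^1$ normalization is needed there; the decay $\mathrm{o}(\varepsilon^{\theta\alpha})$ comes entirely from the $c(\mathbb N)$ (resp.\ $VMO$) hypothesis on the second slot of $f$.
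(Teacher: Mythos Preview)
Your proposal is correct and follows essentially the same route as the paper: both start from the Constantin--E--Titi identity, split $|\delta_y f|^2=|\delta_y f|^{\theta}|\delta_y f|^{2-\theta}$, and apply H\"older/Minkowski with the exponent relations $\tfrac1r=\tfrac1{p_1}+\tfrac1{p_2}$, $\tfrac1s=\tfrac1{q_1}+\tfrac1{q_2}$ to land on the hypotheses $f\in L^{(2-\theta)p_2}_tL^{(2-\theta)q_2}_x$ and $f\in L^{\theta p_1}_tB^{\alpha}_{\theta q_1,c(\mathbb N)}$. The only cosmetic difference is that you dominate the second CET term $(f-f^{\varepsilon})^{2}$ by the first via Jensen (using $\eta_\varepsilon\ge 0$, $\int\eta_\varepsilon=1$), whereas the paper estimates it separately through Lemma~\ref{lem2.2}; and your sketch for case~(2) correctly singles out the extra H\"older step in the $y$-variable (with conjugate exponents $q_1$ and $q_1/(q_1-1)$) that forces the additional constraints $q_2,p_2\ge q_1/(q_1-1)$.
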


 \begin{proof}
First, we recall the following   identity observed  by Constantin-E-Titi   in \cite{[CET]} that
	\be\label{CETI}\ba&(ff)^{\varepsilon}(x)- f^{\varepsilon}f^{\varepsilon}(x)\\
=&
\int_{\Omega}\eta_{\varepsilon}(y)
[f(x-y)-f(x)][f(x-y)-f(x)]dy-
(f-f^{\varepsilon})(f-f^{\varepsilon})(x).
	\ea\ee

(1)	Using the H\"older inequality and the Minkowski inequality and the definition of $B_{\theta q_{1},c(\mathbb{N})}^{\alpha}$, we obtain
	$$\begin{aligned} & \|(ff)^{\varepsilon}- f^{\varepsilon}f^{\varepsilon}\|_{L^r(0,T;L^s(\Omega))}\\ \leq&C\int_{|y|\leq\varepsilon}\eta_{\varepsilon}(y)\| f(\cdot-y)-f(\cdot)\|^{\theta}_{L^{p_1\theta}(0,T;L^{q_1\theta}(\Omega))} \| f(\cdot-y)-f(\cdot)\|^{2-\theta}_{L^{p_2(2-\theta)}(0,T;L^{q_2(2-\theta)}(\Omega))}dy \\&+C\| f-f^{\varepsilon}\|^{\theta}_{L^{p_1\theta}(0,T;L^{q_1\theta}(\Omega))}  \| f-f^{\varepsilon}\|^{2-\theta}_{L^{p_2(2-\theta)}(0,T;L^{q_2(2-\theta)}(\Omega))}\\
	\leq& C\varepsilon^{\theta\alpha }\| f\|^{\theta}_{L^{\theta p_{1}}(0,T;B_{\theta q_{1},c(\mathbb{N})}^{\alpha}(\Omega))}
	\|f\|^{2-\theta}_{L^{p_2(2-\theta)}(0,T;L^{q_2(2-\theta)}(\Omega))}\\
	\leq &o(\varepsilon^{\theta\alpha }),\ \text{as}\ \varepsilon\to 0,
	\end{aligned} $$
where  Lemma \ref{lem2.2} was used.

(2) The  H\"older inequality allows us to discover that
$$|I|\leq \f{1}{\varepsilon^{d}}\B[\int_{B_{\varepsilon}(0)}
\big|\eta \big(\f{y}{\varepsilon}\big)
 [|f(x-y)-f(x)|^{2-\theta}]\big|^{\f{q_{1}}{q_{1}-1}}dy
 \B]^{1-\f{1}{q_{1}}}\B[\int_{B_{\varepsilon}(0)}
|f(x-y)-f(x)|^{\theta q_{1}} dy\B]^{\f{1}{q_{1}}}.
$$
Using  the  H\"older inequality once again and Minkowski inequality, we observe that
$$\ba
&\|I\|_{L^{s}(\Omega)}\\ \leq&
C\B[\f{1}{\varepsilon^d}\int_{B_{\varepsilon}(0)}
\big
 \||f(x-y)-f(x)|^{2-\theta} \|_{L^{q_{2}}} ^{\f{q_{1}}{q_{1}-1}}dy\B]^{1-\f{1}{q_{1}}}\B[\int_{\Omega}\fbxoo
|f(x-y)-f(x)|^{\theta q_{1}} dydx\B]^{\f{1}{q_{1}}},
\ea$$
where we require  $\f{1}{s}=\f{1}{q_1}+\f{1}{q_2}$ and $q_{2}\geq \f{ q_{1}}{q_{1}-1}$.

We deduce from the H\"older inequality, $p_{2}\geq\f{q_{1}}{q_{1}-1}$  and  Minkowski inequality once again, as $\varepsilon\to0$,   that
\be\label{key1}\ba & \|I\|_{L^r(0,T;L^s((\Omega))}\\ \leq&C \B[\f{1}{\varepsilon^d}\int_{B_{\varepsilon}(0)}
 \| |f(x-y)-f(x)|^{2-\theta} \|_{L^{p_{2}}(0,T;L^{q_{2}})} ^{\f{q_{1}}{q_{1}-1}}dy\B]^{1-\f{1}{q_{1}}}\\&\times
 \B\| \int_{\Omega}\fbxoo
|f(x-y)-f(x)|^{\theta q_{1}} dydx\B\|_{L^{p_1}(0,T)}\\
  \leq& o(\varepsilon^{\theta\alpha }).
\ea\ee
Thanks to the  H\"older inequality and Remark \ref{rem2.1}, we know that,  as $\varepsilon\to 0$,
\be\label{key2}\begin{aligned}
\|II\|_{L^r(0,T;L^s(\Omega))}
	\leq  C\| |f-f^{\varepsilon}|^{ \theta}\|_{L^{p_1}(0,T;L^{q_1}(\Omega))}  \| |f-f^{\varepsilon}|^{2-\theta}\|_{L^{p_2}(0,T;L^{q_2}(\Omega))}
	\leq  o(\varepsilon^{\theta\alpha}).
	\end{aligned} \ee
Plugging  \eqref{key1} and \eqref{key2} into \eqref{CETI}, we arrive at the desired estimate. The proof of this lemma is completed.

	Then the proof of this lemma is completed.
\end{proof}

 \begin{lemma}\label{lem2.4}(\cite{[YWW],[WWY22]})
Let $\Omega$ denote the whole space $\mathbb{R}^{d}$ or the periodic domain $\mathbb{T}^{d}$.	Let $ p,q,p_1,q_1,p_2,q_2\in[1,+\infty)$ with
$\frac{1}{p}=\frac{1}{p_1}+\frac{1}{p_2},\frac{1}{q}=\frac{1}{q_1}+\frac{1}{q_2} $. Assume $f\in L^{p_1}(0,T;L^{q_1}(\Omega)) $ and $g\in
L^{p_2}(0,T;L^{q_2}(\Omega))$, then as $\varepsilon\to0$, it holds
	\begin{equation}\label{a4}
	\|(fg)^\varepsilon-f^\varepsilon
g^\varepsilon\|_{L^p(0,T;L^q(\Omega))}\rightarrow 0,
	\end{equation}
and
\begin{equation}\label{b7}
	\|(f\times g)^\varepsilon-f^\varepsilon\times g^\varepsilon\|_{L^p(0,T;L^q(\Omega))}\rightarrow 0.
\end{equation}
\end{lemma}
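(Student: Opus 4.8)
The plan is to follow the classical Friedrichs-type commutator argument in a space--time setting, using the Constantin--E--Titi identity already recorded in \eqref{CETI}. Applied to the pair $(f,g)$ it gives
$$\big((fg)^{\varepsilon}-f^{\varepsilon}g^{\varepsilon}\big)(x)=\int_{\Omega}\eta_{\varepsilon}(y)\big[f(x-y)-f(x)\big]\big[g(x-y)-g(x)\big]\,dy-(f-f^{\varepsilon})(g-g^{\varepsilon})(x),$$
so it suffices to show that the two terms on the right-hand side, call them $I_{\varepsilon}$ and $II_{\varepsilon}$, both tend to $0$ in $L^{p}(0,T;L^{q}(\Omega))$ as $\varepsilon\to0$. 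The cross-product version \eqref{b7} will then follow at once by writing each component of $f\times g$ as a finite sum of scalar products $f_ig_j$ and invoking \eqref{a4} for each.

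For $I_{\varepsilon}$, I would first apply the Minkowski inequality to move the average $\int\eta_{\varepsilon}(y)\,dy$ outside the $L^{p}(0,T;L^{q}(\Omega))$ norm, and then H\"older's inequality in the spatial variable (with $\f1q=\f1{q_1}+\f1{q_2}$) followed by H\"older's inequality in time (with $\f1p=\f1{p_1}+\f1{p_2}$), obtaining
$$\|I_{\varepsilon}\|_{L^{p}(0,T;L^{q}(\Omega))}\leq\int_{|y|\leq\varepsilon}\eta_{\varepsilon}(y)\,\|f(\cdot-y)-f\|_{L^{p_1}(0,T;L^{q_1}(\Omega))}\,\|g(\cdot-y)-g\|_{L^{p_2}(0,T;L^{q_2}(\Omega))}\,dy.$$
Bounding the second factor crudely by $2\|g\|_{L^{p_2}(0,T;L^{q_2}(\Omega))}$ and using that $p_1,q_1<\infty$, continuity of translation in the Bochner space $L^{p_1}(0,T;L^{q_1}(\Omega))$ gives $\sup_{|y|\leq\varepsilon}\|f(\cdot-y)-f\|_{L^{p_1}(0,T;L^{q_1}(\Omega))}\to0$, hence $\|I_{\varepsilon}\|_{L^{p}(0,T;L^{q}(\Omega))}\to0$. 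For $II_{\varepsilon}$ a single application of H\"older in space and time yields $\|II_{\varepsilon}\|_{L^{p}(0,T;L^{q}(\Omega))}\leq\|f-f^{\varepsilon}\|_{L^{p_1}(0,T;L^{q_1}(\Omega))}\|g-g^{\varepsilon}\|_{L^{p_2}(0,T;L^{q_2}(\Omega))}$, and the standard mollification estimate --- $\|h-h^{\varepsilon}\|_{L^{q_i}(\Omega)}\to0$ for a.e.\ $t$, together with the domination $\|h-h^{\varepsilon}\|_{L^{q_i}(\Omega)}\leq2\|h(t)\|_{L^{q_i}(\Omega)}$ and dominated convergence in $t$ --- makes each factor vanish. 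This establishes \eqref{a4}.

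The only point needing a little care --- the ``main obstacle'', although it is a routine one --- is the justification of translation-continuity and of mollifier convergence in the mixed norm $L^{p_1}(0,T;L^{q_1}(\Omega))$: one approximates $f$ in this norm by functions smooth and compactly supported in space (smooth and periodic in the torus case), for which both facts are elementary, and then passes to the limit via the uniform bound $\|f(\cdot-y)-f\|_{L^{p_1}(0,T;L^{q_1}(\Omega))}\leq2\|f\|_{L^{p_1}(0,T;L^{q_1}(\Omega))}$. In the periodic setting one further notes that translation and mollification preserve periodicity, so no boundary effect intervenes. Since all six exponents are finite by hypothesis, no genuine difficulty arises, and the proof reduces to the classical DiPerna--Lions/Friedrichs commutator lemma adapted to Bochner spaces.
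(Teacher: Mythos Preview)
The paper does not actually prove Lemma~\ref{lem2.4}; it is stated with a citation to \cite{[YWW],[WWY22]} and used as a black box. Your argument is correct and is precisely the standard proof one finds in those references: the two-term Constantin--E--Titi decomposition for the bilinear commutator, followed by Minkowski and H\"older to reduce $I_\varepsilon$ to translation continuity in $L^{p_1}(0,T;L^{q_1})$, and H\"older plus mollifier convergence for $II_\varepsilon$. Your treatment of \eqref{b7} by componentwise reduction to scalar products is also the usual route. In fact the paper's own proof of the closely related Lemma~\ref{lem2.3} follows exactly the same template (see the use of \eqref{CETI} there), so your approach is entirely in the spirit of the surrounding arguments.
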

For the convenience of readers, we present the definition of the weak solutions of   the Euler   equation
 \eqref{Euler}.
\begin{definition}\label{eulerdefi}
We say that $(v,\Pi)$ is a weak solution of the incompressible homogeneous Euler equations \eqref{Euler} if  $v\in C_{\text{weak}}([0,T];L^{2}(\Omega))$, $\Pi \in L^{1}_{loc}([0,T]\times \Omega)$ with initial data $v_{0}\in L^{2}(\Omega)$ and
	\begin{enumerate}[(i)]
		\item  for every test vector field $\varphi\in C_{0}^{\infty}([0,T]\times\Omega )^{d}$, there holds,
		\begin{equation*}
			\begin{aligned}
			&\int_{\Omega}[v(x,T) \varphi(x,T)- v(x,0) \varphi(x,0)]\,dx \\
			=&\int_{0}^{T}\int_{\Omega}[v(x,t)\partial_{t}\varphi(x,t)+v(x,t)\otimes v(x,t)\cdot\nabla\varphi(x,t)+\Pi(x,t)\Div \varphi(x,t)]\,dxdt.
		\end{aligned}\end{equation*}
		\item[(ii)]
		$v$ is weakly divergence free, that is, for every test function $\psi\in C_{0}^{\infty}([0,T]\times\Omega)$,
		$$\int_0^T\int_{\Omega} v(x,t)\cdot\nabla\psi(x,t)\,dxdt=0.$$
		
	\end{enumerate}
\end{definition}
\section{Energy conservation of weak solutions for the ideal flow}
In this section, we consider the energy conservation
of weak solutions for the Euler equations
on the whole space $\mathbb{R}^d$ and   for the case of the torus $\mathbb{T}^d$, respectively.
\subsection{Energy conservation for ideal flows on $\mathbb{R}^d$  }

\begin{proof}[Proof of Theorem \ref{the1.1}] (1) One can achieve the proof of this part via
a modification the proof presented in \cite{[CCFS]} for $1<p\leq3$ (see also the proof of (1) in Theorem \ref{the1.3}  below). We provided  a  short proof here $1\leq p\leq3$. The interpolation inequality in Lebesgue space means that
\be\label{3.1} \|\Delta_{j}v\|_{L^{3}(\mathbb{R}^{d})}\leq \|\Delta_{j}v\|_{L^{2}(\mathbb{R}^{d})}^{1-\f{p}{3}}
 \|\Delta_{j}v\|_{L^{\f{2p}{p-1}}(\mathbb{R}^{d})}^{\f{p}{3}},\ \text{for\ any }\ p\in [1,3],\ee
which implies that
\be\label{3.2}\ba
 2^{\f13 j}\|\Delta_{j}v\|_{L^{3}(\mathbb{R}^{d})}\leq& \|\Delta_{j}v\|_{L^{2}(\mathbb{R}^{d})}^{1-\f{p}{3}}
\B[2^{j\f1p}\|\Delta_{j}v\|_{L^{\f{2p}{p-1}}(\mathbb{R}^{d})}\B]^{\f{p}{3}}\\
\leq& C\| v\|_{L^{2}(\mathbb{R}^{d})}^{1-\f{p}{3}}  \|v\|^{\f{p}{3}}_{
B^{\f1p}_{\f{2p}{p-1},\infty}(\mathbb{R}^{d})}.
\ea\ee
As a consequence, we discover that
$$ \|v\|_{
B^{\f13}_{3,\infty}(\mathbb{R}^{d})}\leq  C\| v\|_{L^{2}(\mathbb{R}^{d})}^{1-\f{p}{3}}  \|v\|^{\f{p}{3}}_{
B^{\f1p}_{\f{2p}{p-1},\infty}(\mathbb{R}^{d})},$$
which turns out that
$$ \|v\|_{L^{3}(0,T;
B^{\f13}_{3,\infty}(\mathbb{R}^{d}))}\leq  C\| v\|_{L^{\infty}(0,T;L^{2}(\mathbb{R}^{d}))}^{1-\f{p}{3}}  \|v\|^{\f{p}{3}}_{L^{p}(0,T;
B^{\f1p}_{\f{2p}{p-1},\infty}(\mathbb{R}^{d}))}.$$
Moreover, it follows from \eqref{3.2} that
$$\ba 2^{\f13 j}\|\Delta_{j}v\|_{L^{3}(\mathbb{R}^{d})}\leq& C\| v\|_{L^{2}(\mathbb{R}^{d})}^{1-\f{p}{3}}
\B[2^{j\f1p}\|\Delta_{j}v\|_{L^{\f{2p}{p-1}}(\mathbb{R}^{d})}\B]^{\f{p}{3}},
\ea$$
where $C$ is independent of $j$.
This  ensures that $v\in L^{p}(0,T;
B^{\f1p}_{\f{2p}{p-1},c(\mathbb{N})}(\mathbb{R}^{d}))$ means $v\in L^{3}(0,T;
B^{\f13}_{3,c(\mathbb{N})}(\mathbb{R}^{d}))$. Thus, the proof of this part is completed.

(2)        We   multiply the Euler equations \eqref{Euler}   by $S_{N}(S_{N}v)$ and utilize
 integration by parts to get
$$
\f{1}{2}\f{d}{dt}\int_{\mathbb{R}^{2}} |S_{N}v |^{2}dx=- \int_{\mathbb{R}^3}
\partial_{j}(v_{j}v_{i})S^{2}_{N}v_{i}\,dx.
$$
Performing a time integration, we know that
\be\label{zheng1}
\f12\|S_{N}v\|_{L^{2}(\mathbb{R}^{3})}^{2}-\f12\|S_{N}v_{0}\|_{L^{2}(\mathbb{R}^{3})}^{2} =-\int_{0}^{t}\int_{\mathbb{R}^3}
\partial_{j}(v_{j}v_{i})S^{2}_{N}v_{i}\,dxds.
\ee
Our plan is to show that, as $N\rightarrow+\infty$, the right hand side of the above equation \eqref{zheng1} converges  to 0.

To do this, adapting the incompressibility condition $\operatorname{div}v=0$,   we
discover that
$$\ba
-\int_{0}^{t}\int_{\mathbb{R}^3}
\partial_{j}(v_{j}S^{2}_{N}v_{i})S^{2}_{N}v_{i}\,dxds
=&-\f12\int_{0}^{t}\int_{\mathbb{R}^3}
v_{j}\partial_{j}(S^{2}_{N}v_{i})^{2} \,dxds=0.
\ea$$
As  a consequence, we rewrite the right hand side of \eqref{zheng1}
as
\begin{equation*}
\begin{split}
 -\int_{0}^{t}\int_{\mathbb{R}^3}
\partial_{j}(v_{j}v_{i})S^{2}_{N}v_{i}\,dxds
=  \int_{0}^{t}\int_{\mathbb{R}^3}
S_{N}\big[v_{j}(\text{I}_{\textnormal d}-S^{2}_{N})v_{i}\big]S_{N}\partial_{j}v_{i}\,dxds.
\end{split}
\end{equation*}
Utilizing  the H\"older inequality, we find
\be\ba\label{3.4}
&\left|\int_{0}^{t}\int_{\mathbb{R}^3}S_{N}\big[v_{j}
(I-S^{2}_{N})v_{i}\big]S_{N}\partial_{j}v_{i}\,dxds\right| \\
\leq&C\big\| S_{N}\big[v_{j}(\text{I}_{\textnormal d}-S^{2}_{N})v_{i}\big]\big\|_{L^{\f{p}{p-1}}(0,T;L^{\f{2dp}{dp+d-2p+2}}
(\mathbb{R}^{3}))}  \big\| \nabla S_{N}v \big\|_{L^{p}(0,T;L^{\f{2dp}{(d+2)(p-1)}}(\mathbb{R}^{3}))}\\
\leq&  C\big\|  v_{j}(\text{I}_{\textnormal d}-S^{2}_{N})v_{i}\big\|_{L^{\f{p}{p-1}}(0,T;L^{\f{2dp}{dp+d-2p+2}}
(\mathbb{R}^{3}))}  \big\| S_{N}\partial_{j}v \big\|_{L^{p}(0,T;L^{\f{2dp}{(d+2)(p-1)}}(\mathbb{R}^{3}))}\\
\leq& C\|  v   \|_{L^{\f{2p}{p-1}}(0,T;L^{L^{\f{4dp}{dp+d-2p+2}}}(\mathbb{R}^{3}))} \big\|\big(\text{I}_{\textnormal d}-S^2_{N}\big)v \big\|_{L^{\f{2p}{p-1}}(0,T;L^{\f{4dp}{dp+d-2p+2}}(\mathbb{R}^{3}))}  \| \nabla  v \|_{L^{p}(0,T;L^{\f{2dp}{(d+2)(p-1)}}(\mathbb{R}^{3}))}.
\ea\ee
We apply the  Gagliardo-Nirenberg inequality to get
$$
\|  v   \|_{L^{\f{4dp}{dp+d-2p+2}}(\mathbb{R}^{3}) }\leq  C\|  v   \|^{\f{6-2p-pd+3d}{2d+4}}_{L^{2}(\mathbb{R}^{3}) }
 \|  \nabla v   \|^{\f{ pd+2p-2-d}{ 2d+4}}_{L^{\f{2dp}{(d+2)(p-1)}}(\mathbb{R}^{3}) },$$
 from which follows that
\be\label{sob2}
\|  v   \|_{L^{\f{2p}{p-1}}(0,T;L^{\f{4dp}{dp+d-2p+2}}(\mathbb{R}^{3}))}\leq  C\|  v   \|^{\f{6-2p-pd+3d}{2d+4}}_{L^{\infty}(0,T;L^{2}(\mathbb{R}^{3})) }\| \nabla  v \|_{L^{p}(0,T;L^{\f{2dp}{(d+2)(p-1)}}(\mathbb{R}^{3}))}^{\f{ pd+2p-2-d}{ 2d+4}}.\ee
Invoking  Littlewood-Paley theory  and Dominated Convergence Theorem, we see that, as $N\rightarrow\infty$,
$$\big\|\big(\text{I}_{\textnormal d}-S^2_{N}\big)v \big\|_{L^{\f{2p}{p-1}}(0,T;L^{\f{4dp}{dp+d-2p+2}}(\mathbb{R}^{3}))}
 \rightarrow0,
$$
which follows from that, as $N\rightarrow\infty$,
$$
-\int_{0}^{t}\int_{\mathbb{R}^3}
\partial_{j}(v_{j}v_{i})S^{2}_{N}v_{i}\,dxds\rightarrow0.
$$
Taking $N\rightarrow\infty$ in    \eqref{zheng1}, we derive from \eqref{zheng1} that
$$ \f12\|v(t)\|_{L^{2}(\mathbb{R}^{3})}^{2}= \f12\|v_0\|_{L^{2}(\mathbb{R}^{3})}^{2}.$$
Thus we  complete the proof.

\end{proof}

\subsection{Energy conservation for invicid  flows on torus }
Unlike the interpolation and Littlewood-Paley theory utilized in the last sub-section, the mollifier kernel and Constantin-E-Titi type estimates are applied to deal with  periodic case.

\begin{proof}[Proof of Theorem \ref{the1.2}](1)-(2) Let us begin by
mollifying the equations \eqref{Euler}  in spatial direction to get
\be\ba\label{rmhd}
&\partial_{t}{v^{\varepsilon}} +  \text{div}(v\otimes v)^{\varepsilon} +\nabla\Pi^{\varepsilon}= 0.
\ea\ee
The standard energy estimate means that
\begin{equation}\ba\label{3.6}
\f12\|v^{\varepsilon}(T)\|^{2}_{L^{2}(\mathbb{T}^d)}
-\f12\|v^{\varepsilon}(0)\|^{2}_{L^{2}(\mathbb{T}^d)} =-\int_{0}^{T}\int_{\mathbb{T}^d}
\text{div}(v\otimes v)^{\varepsilon}v^{\varepsilon}dxdt.
\ea\end{equation}
In view  of divergence-free condition and the integration by parts,  we reformulate  the right hand side of  \eqref{3.6}  as
\begin{equation}\label{c22}\ba
&-\int_{0}^{T}\int_{\mathbb{T}^d}
\text{div}(v\otimes v)^{\varepsilon}v^{\varepsilon}dxdt\\
=&-\int_{0}^{T}\int_{\mathbb{T}^d}
\text{div}\big[(v\otimes v)^{\varepsilon}-
(v^{\varepsilon}\otimes v^{\varepsilon})\big]v^{\varepsilon}dxdt-
\int_{0}^{T}\int_{\mathbb{T}^d}\text{div}(v^{\varepsilon}\otimes v^{\varepsilon})v^{\varepsilon}dxdt\\
=&-\int_{0}^{T}\int_{\mathbb{T}^d}
\text{div}\big[(v\otimes v)^{\varepsilon}-
(v^{\varepsilon}\otimes v^{\varepsilon})\big]v^{\varepsilon}dxdt\\
=&\int_{0}^{T}\int_{\mathbb{T}^d}
\big[(v\otimes v)^{\varepsilon}-
(v^{\varepsilon}\otimes v^{\varepsilon})\big]:\nabla v^{\varepsilon}dxdt.
\ea\end{equation}
Plugging this  into \eqref{3.6}, we arrive at
\begin{equation}\label{c27}
	\begin{aligned}
	 \f12\|v^{\varepsilon}(T)\|^{2}_{L^{2}(\mathbb{T}^d)}
	 	-\f12\|v^{\varepsilon}(0)\|^{2}_{L^{2}(\mathbb{T}^d)}
	 	=\int_{0}^{T}\int_{\mathbb{T}^d}
	\big[(v\otimes v)^{\varepsilon}-
	(v^{\varepsilon}\otimes v^{\varepsilon})\big]:\nabla v^{\varepsilon}dxdt.
	\end{aligned}
	\end{equation}
On the one hand, in virtue of the H\"older inequality, we have
$$\ba
&\B|\int_{0}^{T}\int_{\mathbb{T}^d}
	\big[(v\otimes v)^{\varepsilon}-
	(v^{\varepsilon}\otimes v^{\varepsilon})\big]:\nabla v^{\varepsilon}dxdt\B|\\
\leq& C \|(v^{\varepsilon}\otimes v^{\varepsilon}) -(v\otimes v)^{\varepsilon}\|_{L^{\f{p}{p-1}}(0,T;L^{\f{q}{q-1}}(\mathbb{T}^d))}
 \|\nabla v^{\varepsilon}\|_{L^{p}(0,T;L^{q}(\mathbb{T}^d))}.
\ea$$
We select
   $p_{2}(2-\theta)=\infty,q_{2}(2-\theta)=2$, $s=\f{q}{q-1}$ and $r=\f{p}{p-1}$ in Lemma  \ref{lem2.3} to derive that  $\theta p_{1}=\f{p\theta}{p-1}$, $\theta q_{1}=\f{2 q\theta}{q\theta-2}$
and let $\theta p_{1}= p$ and $\theta q_{1}=q$, we can obtain
\begin{equation*}
	\begin{aligned}
\|(v^{\varepsilon}\otimes v^{\varepsilon}) -(v\otimes v)^{\varepsilon}\|_{L^{\f{p}{p-1}}(0,T;L^{\f{q}{q-1}}(\mathbb{T}^d))}
\leq o(\varepsilon^{\theta\alpha})\leq o(\varepsilon^{(p-1)\alpha}) ,		
	\end{aligned}
\end{equation*}
where we have used the facts that  $\theta =\frac{2}{q-2}=p-1$ and $q=\frac{2p}{p-1}$.

On the other hand, in light of Lemma \eqref{lem2.2}, we have
$$\|\nabla v^{\varepsilon}\|_{L^{p}(0,T;L^{q}(\mathbb{T}^d))}
 \leq o(\varepsilon^{ \alpha-1}).$$
Hence,
 $$\ba
&\B|\int_{0}^{T}\int_{\mathbb{T}^d}
	\big[(v\otimes v)^{\varepsilon}-
	(v^{\varepsilon}\otimes v^{\varepsilon})\big]:\nabla v^{\varepsilon}dxdt\B|
\leq o(\varepsilon^{ (p-1)\alpha+\alpha-1})\leq  o(\varepsilon^{ p\alpha-1}).
\ea$$
Then letting $\alpha =\frac{1}{p},q=\frac{2p}{p-1}$ and passing to the limit of $\varepsilon$ in \eqref{c27}, we get the energy conservation.

 (3) We deduce from the H\"older inequality that
 $$\ba
&\B|\int_{0}^{T}\int_{\mathbb{T}^d}
	\big[(v\otimes v)^{\varepsilon}-
	(v^{\varepsilon}\otimes v^{\varepsilon})\big]:\nabla v^{\varepsilon}dxdt\B|\\
\leq& C \|(v^{\varepsilon}\otimes v^{\varepsilon}) -(v\otimes v)^{\varepsilon}\|_{L^{\f{p}{p-1}}(0,T;L^{\f{2dp}{dp+d-2p+2}}(\mathbb{T}^d))}
 \|\nabla v^{\varepsilon}\|_{L^{p}(0,T;L^{\f{2dp}{(d+2)(p-1)}}(\mathbb{T}^d))}
\ea$$
From an analogue of inequality \eqref{sob2} on bounded domain  and $v\in   L^{\infty}(0,T;L^{2}(\mathbb{T}^{d}))$, we derive from $\nabla v\in L^{p}(0,T;L^{\f{2dp}{(d+2)(p-1)}}(\mathbb{R}^{d})) $ that $v\in L^{\f{2p}{p-1}}(0,T;L^{\f{4dp}{dp+d-2p+2}}(\mathbb{T}^{d}))$. Employing Lemma \ref{lem2.4}, we find, as $\varepsilon\rightarrow0,$
$$\int_{0}^{T}\int_{\mathbb{T}^d}
	\big[(v\otimes v)^{\varepsilon}-
	(v^{\varepsilon}\otimes v^{\varepsilon})\big]:\nabla v^{\varepsilon}dxdt\rightarrow0.$$
The proof of this theorem is completed.
\end{proof}

\section{Helicity conservation  of weak solutions for the ideal flow}
In this section, we are devoted to the proof of Theorem \ref{the1.3} and \ref{the1.4}, involving the minimum regularity of the weak solutions to guarantee the helicity conservation for the incompressible Euler equations.
\subsection{Helicity conservation for ideal flows on $\mathbb{R}^3$ }
\begin{proof}[Proof of Theorem \ref{the1.3}]
(1)
For any $p\in [1,3]$, by virtue of
$$2^{\f23 j}\|\Delta_{j}u\|_{L^{3}(\mathbb{R}^{3})}\leq C\|\Delta_{j}u\|_{L^{2}(\mathbb{R}^{3})}^{1-\f{p}{3}}
\B[2^{j\f2p}\|\Delta_{j}u\|_{L^{\f{2p}{p-1}}(\mathbb{R}^{3})}\B]^{\f{p}{3}} $$
and the helicity conservation result in \cite{[CCFS]},
one can argue almost exactly as the proof in the Theorem \ref{the1.1} to complete the
proof this part. We omit the detail here.  As compensation, we present the proof via the Littlewood-Paley theory for $2<p\leq3$.

Combining the vorticity equations \eqref{vorticityeq} and some straightforward calculations,
we conclude by integration by parts that
\be\label{4.1}\ba
&\f{d}{dt}\int_{\mathbb{R}^3}(S_{N}v \cdot S_{N}\omega )dx\\
=&\int_{\mathbb{R}^3} S_{N}v_{t} \cdot
S_{N}\omega +S_{N}\omega_{t} \cdot S_{N}v dx\\
=&\int_{\mathbb{R}^3}-\left[S_{N}(\omega\times v)+\nabla S_{N}(\Pi+\f12|v|^{2}) \right]\cdot S_{N}\omega +
 \left(\text{curl}S_{N}\left( v\times\omega\right)\right) \cdot S_{N}v dx\\
=&\int_{\mathbb{R}^3}-S_{N}\left(\omega\times v\right) \cdot S_{N}\omega +
S_{N}\left(   v\times\omega \right) \cdot S_{N}\omega dx.
 \ea\ee
 Using the fact $(   S_{N}v \times S_{N}\omega )\cdot S_{N}\omega =   S_{N}v \cdot(S_{N}\omega  \times S_{N}\omega )=0$, we reformulate \eqref{4.1} as
 \be\label{4.2}\ba
\f{d}{dt}\int_{\mathbb{R}^3}(S_{N}v \cdot S_{N}\omega )dx
 =&\int_{\mathbb{R}^3}-2\Big[S_{N}(\omega\times v)  -S_{N}\omega \times S_{N}v  \Big]\cdot S_{N}\omega dx.
 \ea\ee
 To control the right hand side of \eqref{4.2}, we recall the identity
$$\nabla(A\cdot B)=A\cdot\nabla B+B\cdot\nabla A+A\times\text{curl}B+B\times\text{curl}A$$
which together with
$ \Div \omega =0$ yields that
$$ \text{div}(v\otimes v) = \f{1}{2}\nabla |v|^{2}+\omega\times v,$$
which gives
$$ \omega\times v = \text{div}(v\otimes v)-\f{1}{2}\nabla |v|^{2}.$$
As a consequence, we deduce that
$$S_{N}(\omega\times v )=S_{N}(v\cdot\nabla v)-\f{1}{2}\nabla S_{N}(|v|^{2}) = \text{div}S_{N}(v\otimes v) -\f{1}{2}\nabla S_{N}(|v|^{2})$$
and
$$ S_{N}\omega \times S_{N}v  =S_{N}v \cdot\nabla S_{N}v -\f{1}{2}\nabla |S_{N}v |^{2}= \text{div} (S_{N}v  \otimes S_{N}v )-\f{1}{2}\nabla |S_{N}v |^{2}.$$
Plugging the latter two equations into \eqref{4.2}, we use integration by parts to obtain that
\be \ba\label{4.3}
&\f{d}{dt}\int_{\Omega} S_{N}v \cdot S_{N}\omega dx\\=&-2\int_{\Omega}\text{div}\big(S_{N}(v\otimes v)  -(S_{N}v \otimes S_{N}v)\big)\cdot S_{N}\omega +\frac{1}{2}\nabla \left(|S_{N}v |^2 -S_{N}(|v|^2)\right)\cdot S_{N}\omega dx\\
=& 2\int_{\Omega} \big(S_{N}(v\otimes v)-S_{N}v \otimes S_{N}v \big): \nabla S_{N}\omega dx.
    \ea\ee
    Making use of the H\"older inequality, we have
$$
	\begin{aligned}
	&\B|\int_{\Omega} \big(S_{N}(v\otimes v)-S_{N}v \otimes S_{N}v \big): \nabla S_{N}\omega dx \B|\\
\leq	&C  \|S_{N}(v\otimes v)-S_{N}v \otimes S_{N}v \|_{ L^{\f{q}{q-1}} (\mathbb{R}^3)}  \|\nabla S_{N}\omega \|_{ L^{q}  (\mathbb{R}^3)}.
\end{aligned}$$
Thanks to the Constantin-E-Titi identity in \cite{[CET]}, we observe that
$$\ba
&S_{N}(v\otimes v) -S_{N}v \otimes S_{N}v \\
=&  2^{ 3N}\int_{\mathbb{R}^3}\tilde{h}(2^{N}y)[v_{i}(x-y)-v_{i}(x)][v_{j}(x-y)-v_{j}(x)]dy-(v_{i}-S_{N}v_{i})(v_{j}-S_{N}v_{j}),
\ea$$
where we used  $2^{ 3N}\int_{\mathbb{R}^3}\tilde{h}(2^{N}y)dy=\mathcal{F}(\tilde{h}(\cdot))|_{\xi=0}=1.$

 For $0\leq\theta\leq2$, we deduce from   the Minkowski inequality and $v\in L^{\infty}(0,T;L^{2} (\mathbb{R}^3))$ that,
$$\ba
 &\|S_{N}(v_{i}v_{j})  -S_{N}v_{i}S_{N}v_{j}\|_{  L^{\f{q}{q-1}}  (\mathbb{R}^3)}
 \\\leq& 2^{3N}\int_{\mathbb{R}^3}|\tilde{h}(2^{N}y)|\| v (x-y)-v (x) \|^{\theta}_{ L^{\f{2q\theta}{q\theta-2}}  (\mathbb{R}^3)}
 \| v(x-y)-v\|^{2-\theta}_{ L^{2}(\mathbb{R}^3) }dy
\\&+\| v_{i}-S_{N}v_{i} \|^{\theta}_{ L^{\f{2q\theta}{q\theta-2}}  (\mathbb{R}^3)} \| v_j-S_{N}v_j \|^{2-\theta}_{ L^{2}(\mathbb{R}^3) } \\  \leq& 2^{3N}\int_{\mathbb{R}^3}|\tilde{h}(2^{N}y)|\| v (x-y)-v (x) \|^{\theta}_{ L^{\f{2q\theta}{q\theta-2}}  (\mathbb{R}^3)}
  dy
 +\| v_i -S_{N}v_i  \|^{\theta}_{ L^{\f{2q\theta}{q\theta-2}}  (\mathbb{R}^3)} \\
=&I+II.
\ea$$
On the one hand, applying the Bernstein inequality  in Lemma \ref{berinequ}, we observe that
\be\ba\label{4.4}
 \|v (x-y)-v (x)\|_{ L^{\f{2q\theta}{q\theta-2}}  (\mathbb{R}^3)}
\leq C\B[\sum_{j< N}2^{j}|y|\| {\Delta}_{j}v\|_{ L^{\f{2q\theta}{q\theta-2}}  (\mathbb{R}^3)}+\sum_{j\geq N}\| {\Delta}_{j}v\|_{ L^{\f{2q\theta}{q\theta-2}}  (\mathbb{R}^3)}\B].
\ea\ee
Before going further, we write
$$\ba
\Gamma(j)=\left\{\begin{aligned}
	&2^{j\alpha},~~~~~~~~\text{if}~j\leq0,\\
	& 2^{-(1-\alpha)j},~~\text{if}~j>0,
\end{aligned}\right.
\ea$$
and  $d_j=2^{j\alpha}\| {\Delta}_{j}v\|_{L^{\f{2q\theta}{q\theta-2}} (\mathbb{R}^{3})}$.

Then we can  rewrite \eqref{4.4} as
$$\ba
&\| v(x-y)-v(x) \|_{ L^{\f{2q\theta}{q\theta-2}}  (\mathbb{R}^3)}\\
\leq&C\B( 2^{N(1-\alpha)}|y|\sum_{j< N}2^{-(N-j)(1-\alpha)}2^{j\alpha}\| {\Delta}_{j}v\|_{ L^{\f{2q\theta}{q\theta-2}}  (\mathbb{R}^3)}+2^{-\alpha N}\sum_{j\geq N} 2^{(N-j)\alpha} 2^{j\alpha}\| {\Delta}_{j}v\|_{ L^{\f{2q\theta}{q\theta-2}}  (\mathbb{R}^3)}\B)
\\
\leq& C(2^{N}|y| +1)2^{ -\alpha N}\left(\Gamma\ast {d}_j\right)(N),
\ea$$
which turns out that
$$\ba
 \| v(x-y)-v(x) \|^{\theta}_{ L^{\f{2q\theta}{q\theta-2}}  (\mathbb{R}^3)}
\leq& C(2^{N}|y| +1)^{\theta}2^{ -\alpha\theta N}\left(\Gamma\ast {d}_j\right)^{\theta}(N).
\ea$$
Some straightforward calculations yield that
$$ \sup_{N} 2^{3N}\int_{\mathbb{R}^3}  |\tilde{h}(2^{N}y)|(2^{N}|y| +1)^{\theta}dy<\infty,$$
which means that
$$I\leq C 2^{ -\alpha\theta N}\left(\Gamma \ast {d}_j\right)^{\theta}(N).$$
On the other hand, notice that
$$|II| \leq\| v -{S}_{N}v  \|^{\theta}_{ L^{\f{2q\theta}{q\theta-2}}  (\mathbb{R}^3)}  \leq \B(\sum_{j\geq N}\| {\Delta}_{j}v\|_{ L^{\f{2q\theta}{q\theta-2}}  (\mathbb{R}^3)}\B)^{\theta}
\leq C2^{ -\alpha\theta  N}\left(\Gamma\ast   {d}_j \right)^{\theta}(N),
$$
where  we used $N>0.$

Therefore, we end up with,
\be\ba\label{4.5}\|S_{N}(v_{i}v_{j})  -S_{N}v_{i}S_{N}v_{j}\|_{  L^{\f{q}{q-1}}  (\mathbb{R}^3)}\leq  C2^{ -\theta\alpha  N}\left(\Gamma\ast   {d}_j \right)^{\theta}(N),
\ea\ee
Owing to the Bernstein inequality, we conclude that
  \be\label{4.6}
\|\nabla S_{N}\omega\|_{ L^{q} (\mathbb{R}^3) }\leq \sum_{j\leq N}2^{2j} \| {\Delta}_{j}v\|_{L^{q} (\mathbb{R}^3)}\leq  2^{N(2 -\alpha)}\left(\Gamma\ast {d}_j\right)(N),
\ee
Combining \eqref{4.5} and \eqref{4.6}, we infer that
 \be\ba\label{4.7}
&\B| \int_{\mathbb{R}^3} \big(S_{N}(v\otimes v)^{\varepsilon}-S_{N}v \otimes S_{N}v \big): \nabla S_{N}\omega dx \B|\\
\leq&C2^{(2- \alpha-\theta\alpha )N}\left(\Gamma\ast  {d}_j\right)^{\theta}(N) \left( \Gamma\ast {d}_j\right)(N).
\ea\ee
 To proceed further, we pick $\alpha$ such that $0<\alpha<1$ and $2- \alpha-\theta\alpha=0$. Hence, we have $\Gamma \in  l^{1}(\mathbb{Z})$.
Letting $\alpha=\frac{2}{p}$ and $q=\frac{2p}{p-1}$, it follows from \eqref{4.7}
that
 $$\ba
&\int_{0}^{T}  \B|\int_{\mathbb{R}^3} \big(S_{N}(v\otimes v)-S_{N}v \otimes S_{N}v \big): \nabla S_{N}\omega dx \B|dt\\
 \leq &C\int_0^T\B|\left[\Gamma\ast  {d}_j\right]^{\theta}(N)\left[ \Gamma\ast  {d}_j\right](N)\B| dt
\\  \leq  &C\|v\|^{\theta}_{L^{\f{p\theta}{p-1}}(0,T;B^{\alpha}_{\f{2q\theta}{q\theta-2},\infty}(\mathbb{R}^{3}))} \|v\|_{L^{p}(0,T;B^{\alpha}_{q,\infty}(\mathbb{R}^{3}))} \\  \leq  &C \|v\|^{\theta+1}_{L^{p}(0,T;B^{\alpha}_{q,\infty}(\mathbb{R}^{3}))} <\infty,
\ea$$
where we used the facts that
 $ \f{p\theta}{p-1}=p$ and $\f{2q\theta}{q\theta-2}=q.$

Invoking the
the dominated convergence theorem, we get
\be\ba\label{4.8}
&\int_{0}^{T}\B| \int_{\mathbb{R}^{3}} \big(S_{N}(v\otimes v)-S_{N}v \otimes S_{N}v \big): \nabla S_{N}\omega dx  \B|dt\\ \leq &C\int_0^T\B|\left[\Gamma\ast  {d}_j\right]^{\theta}(N)\left[ \Gamma\ast  {d}_j\right](N)\B| dt \rightarrow 0, ~\text{as}\  N\rightarrow +\infty.
\ea\ee
By integrating \eqref{4.3} in time over $(0,T)$, we find
\be\label{4.9}\ba
&\int_{\mathbb{R}^{3}}S_{N} v(x,T)\cdot S_{N}\omega(x,T)dx- \int_{\mathbb{R}^{3}} S_{N}v(x,0)\cdot S_{N}\omega(x,0)dx
\\=& 2 \int_{0}^{T}\int_{\mathbb{R}^{3}} \big(S_{N}v\otimes S_{N}v-S_{N}(v\otimes v)\big):\nabla S_{N}\omega dxdt.
\ea\ee
With \eqref{4.8}  in hand, taking $N\rightarrow+\infty $ in \eqref{4.9}, we finish the proof of this part.

(2) We    deduce from the Gagliardo-Nirenberg inequality that
\be\label{4.10}
\|v\|_{L^{\f{ p}{ p-2}}(0,T;L^{\f{3p}{7-2p}}(\mathbb{R}^{3}))}\leq
C\|v\|^{\f{21-7p}{7}}_{L^{\infty}(0,T;L^{2}(\mathbb{R}^{3}))}
\|\nabla v\|^{\f{ 7p-14}{7}}_{ L^{p}(0,T;L^{\f{6p}{5p-7}}(\mathbb{R}^{3}))}.\ee
In addition, there holds
$$\ba
&\B|\int_{0}^{T}\int_{\Omega} \Big(S_{N}(\omega\times v)  -S_{N}\omega \times S_{N}v  \Big)\cdot S_{N}\omega dxdt\B|\\
\leq  & \|S_{N}(\omega\times v)  -S_{N}\omega \times S_{N}v \|_{ L^{\f{p}{p-1}}(0,T;L^{\f{6p}{p+7}}(\mathbb{R}^{3}))}   \|\nabla v\|_{ L^{p}(0,T;L^{\f{6p}{5p-7}}(\mathbb{R}^{3}))}.
\ea$$
It is enough to apply Lemma \ref{lem2.4} to finish the proof.

(3)-(4)   Before going further, we set
$$
\Gamma_{2}(j)=\left\{\begin{aligned}
	&2^{j\beta},~~~~~~~~\text{if}~j\leq0,\\
	& 2^{-(1-\beta)j},~~\text{if}~j>0,
\end{aligned}\right.
$$
and $ {\tilde{d}}_j= 2^{j\beta}\| {\Delta}_{j}\omega\|_{L^{\f{2q\theta}{q\theta-2}} (\mathbb{R}^{3})}$.

We mimic the derivation of \eqref{4.6} to get
\be\label{4.11}
\|\nabla S_{N}\omega\|_{ L^{q} (\mathbb{R}^3) }\leq \sum_{j\leq N}2^{ j} \| {\Delta}_{j}\omega\|_{L^{q} (\mathbb{R}^3)}\leq  2^{N(1 -\beta)}\left(\Gamma_{2}\ast {\tilde{d}}_j\right)(N),
\ee
A combination of \eqref{4.5} and \eqref{4.11}, we arrive at
\be\ba\label{4.12}
&\B| \int_{\mathbb{R}^3} \big(S_{N}(v\otimes v)^{\varepsilon}-S_{N}v \otimes S_{N}v \big): \nabla S_{N}\omega dx \B|\\
\leq&C2^{(1- \beta-\theta\alpha )N}\left(\Gamma\ast  {d}_j\right)^{\theta}(N) \left(\Gamma_{2}\ast {\tilde{d}}_j\right)(N).
\ea\ee
We conclude by the H\"older inequality that
\be\ba\label{4.13}
&\int_{0}^{T}\B| \int_{\mathbb{R}^3} \big(S_{N}(v\otimes v)^{\varepsilon}-S_{N}v \otimes S_{N}v \big): \nabla S_{N}\omega dx  \B|dt\\ \leq &C\int_0^T\left(\Gamma \ast  {d}_j\right)^{\theta}(N)\left( \Gamma_{2}\ast \tilde{d}_j\right)(N) dt
\\  \leq  &C\|v\|^{\theta}_{L^{\f{p\theta}{p-1}}(0,T;B^{\alpha}_{\f{2q\theta}{q\theta-2},\infty})} \|\omega\|_{L^{p}(0,T;B^{\beta}_{q,\infty})} <\infty,
\ea\ee
where we require
$$ \f{p\theta}{p-1}=k, \f{2q\theta}{q\theta-2}=\ell, 1- \beta-\theta\alpha=0.$$
By means of
the dominated convergence theorem, we infer that,  as $ N\rightarrow +\infty$,
$$\ba
&\int_{0}^{T}\B| \int_{\Omega} \big(S_{N}(v\otimes v)^{\varepsilon}-S_{N}v \otimes S_{N}v \big): \nabla S_{N}\omega dx  \B|dt\\ \leq &C\int_0^T\left(\Gamma \ast  {d}_j\right)^{\theta}(N)\left( \Gamma_{2}\ast \tilde{d}_j\right)(N) dt\rightarrow 0.
\ea$$
The proof of this part is completed.
\end{proof}

\subsection{Helicity conservation for ideal flows on $\mathbb{T}^3$ }

\begin{proof}[Proof of Theorem \ref{the1.4}]
(1)-(2)
Arguing in the same manner as in the derivation of \eqref{4.3}, we find
$$\ba
\f{d}{dt}\int_{\mathbb{T}^{3}} v^{\varepsilon}\cdot\omega^{\varepsilon}dx
=& 2\int_{\mathbb{T}^{3}} \big[(v\otimes v)^{\varepsilon}-v^{\varepsilon}\otimes v^{\varepsilon}\big] : \nabla\omega^{\varepsilon}dx,
    \ea$$
    which together with integration with respect to  time leads to
    \be\label{4.14}\ba
&\int_{\mathbb{T}^{3}} v^{\varepsilon}(x,T)\cdot\omega^\varepsilon(x,T)dx- \int_{\mathbb{T}^{3}} v^{\varepsilon}(x,0)\cdot\omega^\varepsilon(x,0)dx
\\=& 2 \int_{0}^{T}\int_{\mathbb{T}^{3}} \big(v^{\varepsilon}\otimes v^{\varepsilon}-(v\otimes v)^{\varepsilon}\big):\nabla\omega^{\varepsilon} dxdt.
\ea\ee
Making full use of the H\"older inequality, we discover that
   \be\ba\label{4.15}
&\B|\int_{0}^{T}\int_{\mathbb{T}^{3}}
	\big((v\otimes v)^{\varepsilon}-
	(v^{\varepsilon}\otimes v^{\varepsilon})\big):\nabla \omega^{\varepsilon}dxdt\B|\\
\leq& C \|(v^{\varepsilon}\otimes v^{\varepsilon}) -(v\otimes v)^{\varepsilon}\|_{L^{\f{p}{p-1}}(0,T;L^{\f{q}{q-1}}(\mathbb{T}^{3}))}
 \|\nabla \omega^{\varepsilon}\|_{L^{p}(0,T;L^{q}(\mathbb{T}^{3}))}.
\ea\ee
Picking  $p_{2}(2-\theta)=\infty,q_{2}(2-\theta)=2$, $s=\f{q}{q-1}$ and $r=\f{p}{p-1}$ in Lemma  \ref{lem2.3} to derive that  $\theta p_{1}=\f{p\theta}{p-1}$, $\theta q_{1}=\f{2 q\theta}{q\theta-2}$
and letting $\theta p_{1}= p$ and $\theta q_{1}=q$, we can obtain
$$ \|(v^{\varepsilon}\otimes v^{\varepsilon}) -(v\otimes v)^{\varepsilon}\|_{L^{\f{p}{p-1}}(0,T;L^{\f{q}{q-1}}(\mathbb{T}^{3}))}
 \leq o(\varepsilon^{\theta\alpha}),$$
where we have used the facts that  $\theta =\frac{2}{q-2}=p-1$ and $q=\frac{2p}{p-1}$.

By virtue of Lemma \ref{lem2.2}, we know that
$$\|\nabla \omega^{\varepsilon}\|_{L^{p}(0,T;L^{q}(\mathbb{T}^{3}))}
 \leq o(\varepsilon^{ \alpha-2}),\ \text{as}\ \varepsilon\to 0.$$
Putting the above estimate together, we observe that
 $$\ba
&\B|\int_{0}^{T}\int_{\mathbb{T}^{3}}
	\big((v\otimes v)^{\varepsilon}-
	(v^{\varepsilon}\otimes v^{\varepsilon})\big):\nabla v^{\varepsilon}dxdt\B|
\leq o(\varepsilon^{ \theta\alpha+\alpha-2}).
\ea$$
Letting $\varepsilon\rightarrow0$ in \eqref{4.14}, we obtain the energy conservation.

 (3) It follows from the H\"older inequality that
 \begin{equation}\label{4.16}\ba
&\B|\int_{0}^{T}\int_{\mathbb{T}^{3}}
	\big((v\times \omega)^{\varepsilon}-
	(v^{\varepsilon}\times \omega^{\varepsilon})\big): \omega^{\varepsilon}dxdt\B|\\\leq  & C\|(v\times \omega)^{\varepsilon}-
	(v^{\varepsilon}\times \omega^{\varepsilon}) \|_{ L^{\f{p}{p-1}}(0,T;L^{\f{6p}{p+7}}(\mathbb{T}^{3}))}   \| \omega\|_{ L^{p}(0,T;L^{\f{6p}{5p-7}}(\mathbb{T}^{3}))}.
\ea\end{equation}
 and
$$ \|v\|_{L^{\f{ p}{ p-2}}(0,T;L^{\f{3p}{7-2p}}(\mathbb{T}^{3}))}\leq
C\|v\|^{\f{21-7p}{7}}_{L^{\infty}(0,T;L^{2}(\mathbb{T}^{3}))}
\|\nabla v\|^{\f{ 7p-14}{7}}_{ L^{p}(0,T;L^{\f{6p}{5p-7}}(\mathbb{T}^{3}))}+C
 \|v\|_{L^{\infty}(0,T;L^{2}(\mathbb{T}^{3}))}.$$
 The H\"older inequality ensures that
$$ \|\omega\times v \|_{ L^{\f{p}{p-1}}(0,T;L^{\f{6p}{p+7}}(\mathbb{T}^{3}))}
 \leq  C\|v\|_{L^{\f{ p}{ p-2}}(0,T;L^{\f{3p}{7-2p}}(\mathbb{T}^{3}))} \|\nabla v\|_{ L^{p}(0,T;L^{\f{6p}{5p-7}}(\mathbb{T}^{3}))}.$$
 We can apply Lemma \ref{lem2.4}  to complete the proof of this part.

 (4)
Choosing   $p_{2}(2-\theta)=\infty,q_{2}(2-\theta)=2$, $q=\f{q}{q-1}$ and $p=\f{p}{p-1}$ in Lemma  \ref{lem2.3}, we get $\theta p_{1}=\f{p\theta}{p-1}$ and $\theta q_{1}=\f{2 q\theta}{q\theta-2}$
and
\be\label{4.17}
\|(v^{\varepsilon}\otimes v^{\varepsilon}) -(v\otimes v)^{\varepsilon}\|_{L^{\f{p}{p-1}}(0,T;L^{\f{q}{q-1}}(\mathbb{T}^3))}
 \leq o(\varepsilon^{\theta\alpha}),\ee
 where  $\theta p_{1}= k$ and $\theta q_{1}=\ell$
  were used.

 It follows from Lemma \ref{lem2.2} that
\be \label{4.18}\|\nabla \omega^{\varepsilon}\|_{L^{p}(0,T;L^{q}(\mathbb{T}^3))}
 \leq O(\varepsilon^{\beta-1}).\ee
Inserting \eqref{4.17} and \eqref{4.18} into \eqref{4.16}, we arrive at
 $$\ba
&\B|\int_{0}^{T}\int_{\mathbb{T}^3}
	\big((v\otimes v)^{\varepsilon}-
	(v^{\varepsilon}\otimes v^{\varepsilon})\big):\nabla v^{\varepsilon}dxdt\B|
\leq o(\varepsilon^{ \theta\alpha+\beta-1}).
\ea$$
This ensures that the right-hand side of \eqref{4.12} vanishes as $\varepsilon\rightarrow0$. The proof of this part is finished.
(5) Exchanging $o$ and $O$  in \eqref{4.17}-\eqref{4.18}, we can prove this theorem.
\end{proof}

\section{Conclusion}
A celebrated result in the study of Onsager conjecture is that
 $v\in L^{3} (0,T; B^{1/3}_{3,c(\mathbb{N})}(\mathbb{R}^3)) $ guarantees the energy conservation of the Euler equations on the whole space, which is shown by  Cheskidov-Constantin-Friedlander-Shvydkoy in  \cite{[CCFS]}. Inspired by recent works \cite{[BG],[B]} of
 Berselli and  Berselli-Georgiadis, we extend it to a more general case $v\in L^{p}(0,T;B^{\f1p}_{\f{2p}{p-1},c(\mathbb{N})} )$ for $1\leq p\leq3. $ We provide two different methods to show it. The first is interpolation technique and the second one is Littlewood-Paley theory as \cite{[CCFS]} for the case $1< p\leq3. $ It seems that the first one is better than the second one. As a byproduct, a  observation is that
the general case immediately implies    energy conservation sufficient condition in terms of gradient  in  space $L^{p}(0,T; L^{\f{6p}{5p-5}}(\mathbb{R}^{3}))$  derived in \cite{[BC],[WMH],[Zhang],[BY2],[Berselli]} for the Navier-Stokes equations.
In addition, parallel to energy,  the helicity conservation of weak solutions in inviscid flow is also considered. It seems that our argument can be applied to other fluid equations.

It is shown that   the space $B^{1/3}_{3,c(\mathbb{N})}$ for the energy conservation  are sharp in   \cite{[CCFS]}. It is an interesting problem  to construct an example to illustrate the sharpness of space $B^{\f1p}_{\f{2p}{p-1},c(\mathbb{N})} $ for $1\leq p\leq3.$
\section*{Acknowledgement}

 Wang was partially supported by  the National Natural
 Science Foundation of China under grant (No. 11971446  and No. 12071113)    and sponsored by Natural Science Foundation of
Henan    (No. 232300421077).
Wei was partially supported by the National Natural Science
Foundation of China under grant (No. 11601423 and No. 12271433).
Wu was partially supported by the National Natural Science
Foundation of China under grant No. 11771423.
Ye was partially sponsored by National Natural Science Foundation of
China (No. 11701145) and Natural Science Foundation of Henan   (No. 232300420111).

%\section*{Compliance with ethical standards}

%\textbf{Conflict of interest} The authors declare that they have no conflict of interest.


\begin{thebibliography}{00}



\bibitem{[BCD]}H. Bahouri, J.-Y. Chemin  and R. Danchin, Fourier Analysis and Nonlinear Partial Differential Equations. Grundlehren dermathematischen Wissenschaften 343, Springer-Verlag, 2011.
\bibitem{[BGSTW]}
 C. Bardos, P. Gwiazda, A. \'Swierczewska-Gwiazda, E. S. Titi and   E. Wiedemann,  Onsager's conjecture in bounded domains for the conservation of entropy and other companion laws. Proc. R. Soc. A, 475 (2019),   18 pp.
\bibitem{[BY2]}
H. Beirao da Veiga and J.  Yang, On the energy equality for solutions to Newtonian and non-Newtonian fluids. Nonlinear Anal. 185 (2019), 388--402.

\bibitem{[Berselli]}
L. C. Berselli. Three-dimensional Navier-Stokes equations for turbulence. Mathematics in
Science and Engineering. Academic Press, London, 2021.






\bibitem{[B]}L. C. Berselli,
Energy conservation for weak solutions of incompressible fluid equations: The H\"older case and connections with Onsager's conjecture.
J. Differential Equations, 368(2023), 350--375

\bibitem{[BG]}L. C. Berselli and S. Georgiadis, Three results on the Energy conservation for the 3D Euler equations. arxiv:2307.04410v1. 2023.

\bibitem{[BC]}
L. C. Berselli and E. Chiodaroli, On the energy equality for the 3D Navier-Stokes
equations. Nonlinear Anal. 192 (2020), 111704, 24 pp




\bibitem{[Chae]}
D. Chae, Remarks on the helicity of the 3-D incompressible Euler equations. Commun. Math. Phys. 240 (2003)
501--507.

\bibitem{[Chae1]}
 D. Chae,
On the Conserved Quantities for theWeak Solutions
of the Euler Equations and the Quasi-geostrophic
Equations
Commun. Math. Phys. 266 (2006), 197--210.





\bibitem{[CCFS]}
A. Cheskidov and  P. Constantin, S. Friedlander and R. Shvydkoy, Energy conservation and Onsager's conjecture for the Euler equations. Nonlinearity, 21 (2008), 1233--52.
 \bibitem{[CL]}
A. Cheskidov and X. Luo, Energy equality for the Navier-Stokes equations in weak-intime Onsager spaces. Nonlinearity, 33 (2020), 1388--1403.
 \bibitem{[Chkhetiani]}
O. Chkhetiani, On the third moments in helical turbulence. JETP Lett. 63 (1996),
808--812.
\bibitem{[CET]} P. Constantin, E. Weinan and E.S. Titi, Onsager's conjecture on the energy conservation for solutions of Euler's equation. Commun. Math. Phys. 165   (1994), 207--209.








\bibitem{[DS0]}
C. De Lellis and L  J.  Sz\'ekelyhidi.
  The Euler equations as a differential inclusion,
Ann. Math. 170  (2009), 1417--1436.

\bibitem{[DS1]}
C. De Lellis and L  J.  Sz\'ekelyhidi.  Dissipative continuous Euler flows. Invent Math. 193 (2013), 377--407.


\bibitem{[De Rosa]}
L. De Rosa, On the helicity conservation for the incompressible Euler equations.  Proc. Amer.
Math. Soc. 148 (2020),  2969--2979.
\bibitem{[DR]}
J. Duchon and R. Robert,  Inertial Energy Dissipation for Weak Solutions of Incompressible Euler and
Navier-Stokes Equations. Nonlinearity. 13 (2000), 249--255.
\bibitem{[Eyink]}
G. Eyink, Energy dissipation without viscosity in ideal hydrodynamics I. Fourier
analysis and local energy transfer. Physica D: Nonlinear Phenomena 78(1994), 222--240.
\bibitem{[Eyink1]}
G. Eyink, Local 4/5-law and energy dissipation anomaly in turbulence. Nonlinearity
16( 2003), 137--145.
  \bibitem{[FW2018]}
U.S. Fjordholm and E. Wiedemann, Statistical solutions and Onsager's conjecture,
Phys. D.,     376-377 (2018), 259--265.
 \bibitem{[Frisch]} U. Frisch, Turbulence. Cambridge University Press. 1995



\bibitem{[Galtier]}
S. Galtier, Introduction to Modern Magnetohydrodynamics. Cambridge:
Cambridge Univ. Press.  2016.






  \bibitem{[Isett]}
P. Isett,
A proof of Onsager's conjecture.
Ann. of Math.  188 (2018),  871--963.

\bibitem{[LWY]}
J. Liu, Y. Wang and Y. Ye,
Energy conservation of weak solutions of the homogeneous and nonhomogeneous Euler  equations via   vorticity. J. Differential Equations. 372 (2023), 254--279.


















\bibitem{[Moffatt]} H. K. Moffatt, The degree of knottedness of tangled vortex lines.
J . Fluid Mech.,  35(1969),    117--129.
\bibitem{[MT]}
H. K. Moffatt and A.  Tsinober,  Helicity in Laminar and Turbulent Flow. Ann. Rev. Fluid Mech. 24 (1992),
281--312.


 \bibitem{[Onsager]}
L. Onsager, Statistical hydrodynamics, Nuovo Cim. (Suppl.) 6 (1949), 279--287.
  \bibitem{[WMH]}
    Y. Wang,  X.  Mei and Y. Huang, Energy equality of the 3D Navier-Stokes equations and generalized Newtonian equations. J. Math. Fluid Mech. 24 (2022), 10 pp.
 \bibitem{[WWY231]}
Y. Wang, W. Wei and Y. Ye, Yaglom's law and conserved quantity dissipation in turbulence.arXiv:2301.10917.

\bibitem{[WY]}
Y. Wang and Y. Ye, A general sufficient criterion for energy conservation in the Navier-Stokes system.  Math. Methods Appl. Sci. 46 (2023),  9268--9285.

\bibitem{[WYY]}Y. Wang, J. Yang and Y. Ye,
On two conserved quantities in the inviscid electron and Hall magnetohydrodynamic equations. arXiv:2303.12248.

\bibitem{[WWY22]}Y. Wang, W. Wei and Y. Ye,
Analytical validation of the helicity conservation for the compressible Euler equations. arXiv:2208.05715.



\bibitem{[YWW]}
Y. Ye, Y. Wang and W. Wei, Energy equality in the isentropic compressible Navier-Stokes equations allowing vacuum.  J. Differential Equations 338 (2022), 551-571.
\bibitem{[Yu]}X. Yu, A note on the energy conservation of the ideal MHD equations. Nonlinearity 22
(2009), 913--922.
\bibitem{[Zhang]}
		Z.	Zhang,  Remarks on the energy equality for the non-Newtonian fluids. J. Math. Anal. Appl. 480 (2019),  9 pp.
\end{thebibliography}
\end{document}